\newtheorem{theorem}{Theorem}[section]
\newtheorem{lemma}[theorem]{Lemma}
\newtheorem{proposition}[theorem]{Proposition}
\newtheorem*{nonumtheorem}{Theorem}
\newtheorem{corollary}[theorem]{Corollary}
\theoremstyle{remark}
\newtheorem{remark}[theorem]{Remark}
\theoremstyle{definition}
\newtheorem{definition}[theorem]{Definition}
\theoremstyle{example}
\numberwithin{equation}{section}
\newcommand{\Z}{\mathbb{Z}}
\newcommand{\R}{\mathbb{R}}
\newcommand{\C}{\mathbb{C}}
\newcommand{\E}{\mathbb{E}}
\begin{document}

\title{Finding Linear Patterns of Complexity One}
\author{Xuancheng Shao}
\address{Department of Mathematics\\ Stanford University\\ 450 Serra Mall, Bldg. 380\\ Stanford, CA 94305-2125}

\begin{abstract}
We study the following generalization of Roth's theorem for $3$-term arithmetic progressions. For $s\geq 2$, define a nontrivial $s$-configuration to be a set of $s(s+1)/2$ integers consisting of $s$ distinct integers $x_1,\cdots,x_s$ as well as the averages $(x_i+x_j)/2$ ($1\leq i<j\leq s$). Our main result states that if a set $A\subset [N]$ has density $\delta\gg (\log N)^{-c(s)}$ for some positive constant $c(s)>0$ depending on $s$, then $A$ contains a nontrivial $s$-configuration. This improves on the previous bound of the form $\delta\gg (\log\log N)^{-c(s)}$ due to Dousse \cite{dousse}. We also deduce, as a corollary, an improvement of a problem involving sumfree subsets.
\end{abstract}

\maketitle

\section{Introduction}

The celebrated Roth's theorem \cite{roth} states that every dense subset of the integers contains a nontrivial $3$-term arithmetic progression ($3$-AP).

\begin{nonumtheorem}[Roth]
Let $0<\delta<1$ be a positive real. For a sufficiently large positive
integer $N$, any subset $A\subset [N]$ with $|A|\geq\delta N$ must
contain a nontrivial $3$-term arithmetic progression.
\end{nonumtheorem}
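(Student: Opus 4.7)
The plan is to prove Roth's theorem via the classical Fourier-analytic density increment strategy. I would first embed $A \subset [N]$ into $\cg{N'}$ for some odd integer $N' \sim 2N$, chosen large enough that no spurious wrap-around 3-APs are introduced. Writing $\widehat{f}(\xi) = \sum_x f(x) e^{-2\pi i \xi x / N'}$ for the Fourier transform on $\cg{N'}$, a standard Plancherel expansion yields the counting identity
\[
  T_3(A) := \bigl|\{(x,d) : x, x+d, x+2d \in A\}\bigr| = \frac{1}{N'} \sum_{\xi \in \cg{N'}} \widehat{1_A}(\xi)^2 \, \widehat{1_A}(-2\xi),
\]
whose $\xi = 0$ term gives the main term of order $\delta^3 N^2$.

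The crux is a dichotomy. Either the nonzero frequencies contribute at most half the main term, in which case $T_3(A) \gg \delta^3 N^2$ exceeds the $\leq N$ trivial APs with $d = 0$ and thus yields a nontrivial 3-AP in $A$; or, combining Parseval's identity $\sum_\xi |\widehat{1_A}(\xi)|^2 \lesssim \delta N^2$ with the triangle inequality, there must exist a nonzero frequency $\xi$ with $|\widehat{1_A}(\xi)| \gg \delta^2 N$. In the second case I would convert this Fourier bias into a physical density increment: use Dirichlet's theorem to approximate $\xi / N'$ by a rational of denominator $\leq \sqrt{N}$, partition $[N]$ into arithmetic progressions of length $\sim \sqrt{N}$ on which the character $x \mapsto e(\xi x / N')$ is nearly constant, and by pigeonhole extract a progression $P$ with $|A \cap P|/|P| \geq \delta + c\delta^2$.

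To finish, I iterate: rescale $P$ to an initial segment of $\Z$ and reapply the argument to the resulting set of density $\geq \delta + c\delta^2$ in an interval of length $\gg \sqrt{N}$. Since density cannot exceed $1$, at most $O(1/\delta)$ increments occur before the density doubles, and at most $O(1/\delta^2)$ in total, after which the main-term branch must fire and produce the desired 3-AP. The principal obstacle is the quantitative bookkeeping: each iteration reduces the ambient length roughly to its square root, so the argument succeeds only when $N$ is doubly exponential in $1/\delta$. This gives Roth's original bound $\delta \gg (\log \log N)^{-1}$, which is more than sufficient for the qualitative existence statement above; the most delicate execution step is extracting the density increment from a single large Fourier coefficient, where one must carefully balance the loss in progression length against the gain in density.
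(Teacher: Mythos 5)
Your sketch is a correct outline of Roth's classical Fourier-analytic density increment proof: the embedding into $\Z/N'\Z$ with $N'\sim 2N$ odd does eliminate wrap-around, the counting identity and the dichotomy (main term dominates, or some nonzero frequency has $|\widehat{1_A}(\xi)|\gg\delta^2 N$) are standard, and the Dirichlet-plus-pigeonhole step converting Fourier bias into a density increment $\delta\mapsto\delta+c\delta^2$ on a progression of length $\sim\sqrt{N}$ is the right mechanism; the only slip is in the bookkeeping, since your bound of $O(1/\delta^2)$ total iterations yields $\delta\gg(\log\log N)^{-1/2}$ rather than Roth's $(\log\log N)^{-1}$ (the latter needs the geometric-series count $O(1/\delta)$ coming from repeated doubling), which is harmless for the qualitative statement. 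Note, however, that the paper does not prove this theorem at all --- it is quoted as background from Roth's original work --- and the paper's own machinery for statements of this kind (Theorem \ref{thm2}, of which Roth's theorem is essentially the $s=2$ case, since every $3$-AP is a $2$-configuration) takes a genuinely different route: instead of passing to a subprogression of length $\sqrt{N}$ at each step, it runs the increment on regular Bohr sets, using a local Gowers $U^2$-norm (Theorem \ref{u2inverse-loc}), a local generalized von Neumann inequality (Proposition \ref{u2control-loc}), and a local large-Fourier-coefficient increment (Lemma \ref{incr-fourier-loc}), so that each step only adds one dimension and shrinks the Bohr radius polynomially in $\delta$. Your approach buys simplicity and suffices for the qualitative claim; the paper's localization buys the much stronger quantitative bound $\delta\gg(\log N)^{-c(s)}$ and handles all $s$-configurations uniformly, which is exactly what is needed for Corollary \ref{sumfree}.
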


Here we use $[N]$ to denote the set of positive integers up to $N$. In fact, Roth obtained the more precise bound $\delta\gg (\log\log
N)^{-1}$. This has been subsequently improved by Szemer\'{e}di \cite{szemeredi} and
Heath-Brown \cite{heath-brown} to $\delta\gg (\log N)^{-c}$ for some small positive
constant $c$. Later Bourgain \cite{bourgain1,bourgain2} improved this to $\delta\gg (\log
N)^{-2/3+o(1)}$. Currently the best quantitative bound is due to Sanders \cite{sanders}:
$\delta\gg (\log N)^{-1+o(1)}$. On the other hand, a construction
of Behrend shows that there exists a subset without nontrivial
$3$-APs with density about $\exp(-C\sqrt{\log N})$ for some constant $C>0$.

Roth's argument relies on a dichotomy which either guarantees the existence of a nontrivial $3$-AP, or leads to a density increment in a shorter subprogression.  See \cite{tao-vu} for an exposition of Roth's density increment argument as well as some of the later improvements.

We are interested in the following generalization of $3$-APs. Fix a
positive integer $s\geq 2$. An $s$-configuration is set of
$s(s+1)/2$  integers
\begin{equation}\label{config}
\{n_i+n_j+a:1\leq i\leq j\leq s\},
\end{equation}
where $n_1,\cdots,n_s,a\in\Z$. For example, when $s=2$, the
three integers $(2n_1+a,n_1+n_2+a,2n_2+a)$ form a $3$-AP. An $s$-configuration is said to be nontrivial if $n_1,\cdots,n_s$ are all
distinct. A geometrical way to think about $s$-configurations is
that they contain $s$ points on the real line and all midpoints
between any two of these points.

It is natural to ask whether every dense subset of the integers contains a nontrivial $s$-configuration (for fixed $s$). This is proved by Dousse \cite{dousse} using Roth's density increment argument.

\begin{theorem}\label{thm1}
Let $A\subset [N]$ with $|A|=\delta N$. Suppose that
\[ \delta\gg (\log\log N)^{-\frac{1}{s(s+1)-1}}. \]
Then $A$ contains a nontrivial $s$-configuration.
\end{theorem}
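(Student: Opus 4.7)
The plan is to follow Roth's density increment strategy, adapted to the system of $m \assign s(s+1)/2$ linear forms $\{n_i+n_j+a : 1\le i\le j\le s\}$ defining an $s$-configuration. These forms are pairwise linearly independent in the variables $(n_1,\dots,n_s,a)$, so the Cauchy--Schwarz complexity of the system is $1$ and Fourier ($U^2$) methods should suffice. I would embed $A$ in a prime-order cyclic group $\Z/N'\Z$ with $N'\asymp N$ and write $1_A=\delta+f$ for the balanced function $f$. Expanding the count
\[ T(A) = \sum_{n_1,\dots,n_s,a\in\Z/N'\Z} \prod_{1\le i\le j\le s}1_A(n_i+n_j+a) \]
yields a main term $\delta^{m}(N')^{s+1}$ together with $2^{m}-1$ error terms, each containing at least one factor of $f$.

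The key step is a generalized von Neumann bound showing that each error term is at most $\|\widehat{f}\|_\infty^{\alpha_s}(N')^{s+1}$ for some suitable $\alpha_s>0$. I would obtain it by a Cauchy--Schwarz cascade, eliminating the variables $n_1,\dots,n_s$ one at a time (duplicating the remaining variables at each stage and exploiting the pairwise independence of the forms), and finally applying Plancherel in the variable $a$. The main technical obstacle is bookkeeping this cascade, since each variable $n_i$ appears in $s$ of the $m$ forms rather than merely $2$ as for $3$-APs; this both makes the argument substantially more intricate than Roth's and determines the exponent $\alpha_s$.

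The standard dichotomy then applies. Either $\|\widehat{f}\|_\infty$ is smaller than a suitable power of $\delta$, in which case $T(A) \ge \tfrac12 \delta^{m}(N')^{s+1}$, and subtracting the $O(\delta^{m-1}N^s)$ trivial configurations (those with $n_i=n_j$ for some $i\ne j$) produces a nontrivial $s$-configuration as soon as $\delta$ is not too small relative to $N^{-1}$; or else there exists $\xi\ne 0$ with $|\widehat{f}(\xi)|\ge c\delta^{K}N$ for some $K=K(s)$. In the latter case, Dirichlet's theorem supplies $q\le\sqrt{N}$ with $\|q\xi/N'\|\le 1/\sqrt{N}$, allowing one to partition $[N]$ into APs of common difference $q$ and length $\gg\sqrt{N}$ on which $x\mapsto \exp(2\pi i\xi x/N')$ is essentially constant; pigeonhole then produces a subprogression $P\subset[N]$ on which $A$ has density at least $\delta+c\delta^{K}$.

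Iterating this dichotomy, $\log N$ loses a factor of $2$ per step while $\delta$ grows multiplicatively by $(1+c\delta^{K-1})$, so the first doubling of $\delta$ requires roughly $\delta^{-(K-1)}$ iterations and hence the condition $\log\log N\gg\delta^{-(K-1)}$; subsequent doublings and the final invocation of the counting step are strictly cheaper. Tracking the exponent through the cascade yields $K=s(s+1)$, so $K-1=s(s+1)-1$ and the stated bound $\delta\gg(\log\log N)^{-1/(s(s+1)-1)}$ follows.
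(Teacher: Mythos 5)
Your proposal is correct in outline and in its exponents, but it follows a genuinely different route from the proof written up in the paper's source. What you describe is the classical Roth-type $L^\infty$ density-increment argument (the approach the paper attributes to Dousse): with $m=s(s+1)/2$ forms, the absence of nontrivial configurations forces $\|1_A-\delta\|_{U^2}\gg_s\delta^{m}$, the $U^2$ inverse theorem then produces a single Fourier coefficient of relative size $\gg_s\delta^{2m}=\delta^{s(s+1)}$, Dirichlet plus pigeonholing gives a density increment of that size on a progression of length roughly $\sqrt N$, and the iteration count $\sim\delta^{-(s(s+1)-1)}$ against the halving of $\log N$ yields exactly the stated condition $\delta^{-(s(s+1)-1)}\ll\log\log N$. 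The paper's own proof of Theorem \ref{thm1} instead runs an \emph{energy}-increment scheme: the $U^2$ inverse theorem is recast in terms of linear factors, a linear Koopman--von Neumann decomposition produces a factor of complexity $O(\eta^{-4})$ on whose atoms $1_A$ is well approximated in $U^2$, the no-configuration hypothesis then forces a density increment of size $\delta/9L$ (a constant-proportion gain, not $\delta^{s(s+1)}$) on an atom, and a separate pigeonhole step converts the atom into an arithmetic progression of length about $N^{c\delta^{4L}}$. The trade-off is increment size versus progression length: your route gives a tiny increment but loses only a square root in length per step, which is exactly calibrated to the bound $(\log\log N)^{-1/(s(s+1)-1)}$; the factor/energy route gives a large increment but a much shorter progression, and in fact yields the stronger shape $\delta\gg\exp(-c(s)\sqrt{\log\log N})$ mentioned in the introduction (of which Theorem \ref{thm1} is a consequence). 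Both are far from the Bohr-set localization that drives Theorem \ref{thm2}.

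One point in your sketch needs care: the claimed exponent $K=s(s+1)$ is only achieved if the generalized von Neumann bound is $|T|\le\min_{ij}\|f_{ij}\|_{U^2}$, obtained from \emph{two} applications of Cauchy--Schwarz in the two variables occurring in the chosen form (this is precisely what complexity one buys, and works for the diagonal forms $2n_i+a$ in $\Z/N'\Z$ since $2$ is invertible), followed by $\|f\|_{U^2}^4\le\|\hat f\|_\infty^2\|f\|_2^2$. A literal cascade that eliminates $n_1,\dots,n_s$ one at a time, duplicating variables at each stage, squares the bound $s$ times and would degrade $\alpha_s$ to roughly $2^{1-s}$, which would not reproduce the exponent $1/(s(s+1)-1)$. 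With the two-step Cauchy--Schwarz (as in Proposition \ref{u2control-loc}, but in the global group) your bookkeeping goes through as claimed.
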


A crucial reason why Roth's argument applies to $s$-configurations is that the set of linear forms in \eqref{config} has {\em complexity} $1$ (regardless of the value of $s$), and thus controlled by Gowers $U^2$-norm. The Gowers uniformity norms are the main tools in Gowers' proof \cite{gowers1,gowers2} of Szemer\'{e}di's theorem. The notion of complexity of linear forms seems to originate from \cite{complexity}; for the precise definition and related results see \cite{higher}.

For comparison, the set of linear forms corresponding to $(k+1)$-APs has complexity $k-1$, and thus controlled by Gowers $U^k$-norm. While analyzing the $U^2$-norm is more or less equivalent to doing Fourier analysis, dealing with the $U^k$-norm when $k>2$ becomes significantly more difficult. Indeed this is the main topic of the book \cite{higher}. Fortunately for us, to study $s$-configurations it is sufficient to consider the $U^2$-norm.

We now state our main result:

\begin{theorem}\label{thm2}
Let $A\subset [N]$ with $|A|=\delta N$. Suppose that
\[ \delta\geq 100\left(\frac{\log\log N}{\log N}\right)^{1/6s(s+1)}. \]
Then $A$ contains a nontrivial $s$-configuration.
\end{theorem}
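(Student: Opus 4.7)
The plan is to run Roth's density-increment argument with the Heath-Brown/Szemer\'edi optimization responsible for the $(\log N)^{-c}$ bound in the classical $3$-AP problem. Because the $L := s(s+1)/2$ linear forms defining an $s$-configuration have complexity $1$ for every $s$, the entire analysis is Fourier-theoretic; indeed, Dousse's proof of Theorem~\ref{thm1} already uses exactly this dichotomy, but extracts the density increment on a much shorter subprogression, which is the source of its weaker $\log\log N$ savings.

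\emph{Fourier counting and dichotomy.} I would embed $A \subset [N]$ in $\Z/N'\Z$ with $N' \asymp sN$ to prevent wrap-around, write $f = 1_A - \delta 1_{[N]}$, and apply the complexity-$1$ generalized von Neumann inequality to obtain
\[
\bigl|T(A) - \delta^L (N')^{s+1}\bigr| \;\ll_s\; \|f\|_{U^2}\,(N')^{s+1}
\]
for the configuration count $T(A) = \sum_{n_1,\dots,n_s,a} \prod_{i \leq j} 1_A(n_i+n_j+a)$. Bounding $\|f\|_{U^2}^4 = \sum_\xi |\widehat f(\xi)|^4 \leq \|\widehat f\|_\infty^2 \sum_\xi |\widehat f(\xi)|^2 \ll \delta\|\widehat f\|_\infty^2$ via Parseval, the main term dominates both the error and the $O_s(N^s)$ trivial configurations provided $\|\widehat f\|_\infty \leq c_s\,\delta^{2L-1/2}\,N'$, in which case we are done. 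Otherwise some nonzero $\xi_0$ satisfies $|\widehat f(\xi_0)| \geq c_s\,\delta^{2L-1/2}\,N'$.

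\emph{Density increment and iteration.} Given such a $\xi_0$, Dirichlet approximation supplies $q \leq Q := (1/\delta)^{O_s(1)}$ with $\|q\xi_0/N'\| \leq 1/Q$, and on any arithmetic progression of common difference $q$ and length $\asymp N/Q$ the character $e(\,\cdot\,\xi_0/N')$ is essentially constant. A standard $L^1$-pigeonhole then extracts a subprogression $P \subseteq [N]$ of length $N_1 \gg N\,\delta^{O_s(1)}$ on which $|A \cap P|/|P| \geq \delta + c_s'\,\delta^{2L-1/2}$. Rescaling $P$ to a fresh interval $[N_1]$ and iterating, the density rises by $\Omega_s(\delta^{2L-1/2})$ per step while $\log N$ falls by $O_s(\log(1/\delta))$; the process terminates (with a configuration produced) after $\asymp \delta_0^{-(2L-3/2)}$ iterations, costing $\asymp \delta_0^{-O_s(L)}\log(1/\delta_0)$ in $\log N$ overall, which one can afford as soon as $\delta_0 \gg_s (\log\log N / \log N)^{1/(6s(s+1))}$.

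\emph{Main obstacle.} The only real departure from Dousse's proof is the density-increment lemma in the Heath-Brown/Szemer\'edi form: securing progression length $N/\mathrm{poly}_s(1/\delta)$ at each step rather than $N^{c_s}$, so that the accumulated $\log N$ loss stays polynomial in $1/\delta_0$. Since the complexity is $1$, a single large Fourier coefficient suffices and the Diophantine step is classical; the real care is in optimizing the polynomial exponents in the counting lemma and in the iteration schedule against each other to obtain the specific exponent $1/(6s(s+1))$ in the statement. The $s$-dependence in that exponent reflects the fact that the Fourier threshold degrades as $\delta^{\Theta(L)}$ while the number of iterations needed to double the density scales inversely with that threshold.
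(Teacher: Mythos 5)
The gap is in your density-increment step. From a single large Fourier coefficient $|\widehat f(\xi_0)| \geq c_s\,\delta^{2L-1/2}N'$ you cannot extract a subprogression of length $N\,\delta^{O_s(1)}$ on which $e(x\xi_0/N')$ is essentially constant. Dirichlet gives, for any parameter $Q$, some $q\leq Q$ with $\|q\xi_0/N'\|\leq 1/Q$, and the phase is roughly constant on a progression of common difference $q$ only while its length $M$ satisfies $M\,\|q\xi_0/N'\|\ll \delta^{O(1)}$, i.e.\ $M\ll \delta^{O(1)}Q$; but in the worst case (a badly approximable $\xi_0/N'$) one has $q$ comparable to $Q$, and progressions of difference $q$ inside $[N]$ have length at most $N/q$. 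Balancing forces $M\approx \delta^{O(1)}\sqrt{N}$, not $N/\mathrm{poly}_s(1/\delta)$ --- this Diophantine loss is precisely why the single-frequency Roth argument (and Dousse's Theorem \ref{thm1}) only reaches a $\log\log N$ bound. The Heath-Brown/Szemer\'edi optimization you invoke is not ``a longer progression from one frequency'': it works with the entire large spectrum simultaneously to obtain a multiplicatively larger increment while still descending to progressions of length a small power of $N$, and adapting that bookkeeping to $s$-configurations is a genuinely different argument from the one you sketch (run in the Green--Tao style, the introduction notes it yields only $\exp(-c(s)\sqrt{\log\log N})$). With your stated increment lemma removed, the iteration schedule and the final exponent $1/6s(s+1)$ have no support.

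The paper circumvents exactly this obstruction by Bourgain-style localization to Bohr sets rather than progressions: the increment is obtained on a translate of a Bohr set whose dimension grows by one per step while its radius and length shrink only by $\mathrm{poly}_s(\delta)$ factors (Lemma \ref{incr-fourier-loc}, Proposition \ref{prop2}), and the size bound $|\Lambda|\geq\epsilon^d M$ of Lemma \ref{bohrlow} then keeps the cumulative loss at $\exp(-\mathrm{poly}_s(1/\delta))$, which is what produces the threshold $\delta\gg (\log\log N/\log N)^{1/6s(s+1)}$. The price is that after the first step the set lives on a Bohr set, which cannot be ``rescaled to a fresh interval'' the way a progression can, so the counting and inverse theory must themselves be localized; this is why the paper develops the local norm $\|\cdot\|_{U^2(\Lambda,\Lambda_i,\Lambda_j)}$, the local generalized von Neumann inequality (Proposition \ref{u2control-loc}) and the local inverse theorem (Theorem \ref{u2inverse-loc}) --- the components entirely absent from your outline. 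To salvage your plan you would need either to import this Bohr-set machinery or to genuinely run a large-spectrum argument; the single-frequency increment at length $N\,\delta^{O_s(1)}$ that your iteration rests on is not available.
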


The exponent $1/6s(s+1)$ can certainly be improved slightly by a
more careful analysis. We shall not do so here. Our argument should
work for other linear patterns of complexity $1$ (and thus the title
for this paper). However doing so in this generality involves some
technical complications. We will thus focus solely on
$s$-configurations.

The proof of Theorem \ref{thm2} relies on Fourier analysis localized
at certain approximate subgroups called Bohr sets. This technique is
originated by Bourgain \cite{bourgain1} in his work on finding 3-APs
and developed further in \cite{localfourier1,localfourier2}. In
order for the local Fourier analysis argument to work on
$s$-configurations, we need a local analogue of Gowers $U^2$-norm;
this is the main innovation of this paper.

It is worth mentioning that a weaker bound of the form $\delta\gg\exp(-c(s)\sqrt{\log\log N})$ can be obtained by following the argument of Green-Tao \cite{green-tao1}, where the problem of finding $4$-APs is studied. The main idea behind their argument is in turn borrowed from Szemer\'{e}di and Heath-Brown's work in the case of $3$-APs. Our problem with $s$-configurations that are governed by Gowers $U^2$-norm is much simpler than the problem with $4$-APs that are governed by Gowers $U^3$-norm. It is thus not surprising that we are able to get a better bound for $s$-configurations. In the ongoing work of Green-Tao they show that a bound of the same quality can be obtained in the case of $4$-APs as well.

The improved bound in Theorem \ref{thm2}, combined with the argument of Sudakov, Szemer\'{e}di, and Vu \cite{sumfree}, leads to an improvement in a problem involving sumfree sets.

\begin{corollary}\label{sumfree}
Any set $A\subset\Z$ of size $n$ contains a subset $B\subset A$ with
size $|B|\geq (\log n)(\log\log n)^{1/2-o(1)}$ such that $B$ is
sumfree with respect to $A$. In other words, $b+b'\notin A$ for any
two distinct elements $b,b'\in B$.
\end{corollary}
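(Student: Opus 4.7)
The plan is to invoke the machinery of Sudakov, Szemer\'edi, and Vu~\cite{sumfree} essentially as a black box, replacing their appeal to Dousse's Theorem~\ref{thm1} by our sharper Theorem~\ref{thm2}. The SSV framework is arranged so that the lower bound on $|B|$ is an increasing function of the quality of the density threshold in the underlying $s$-configuration theorem: stronger density bounds for $s$-configurations translate directly into larger admissible $s$, hence larger sumfree subsets. Our only real task is therefore to re-optimize their parameter choices with Theorem~\ref{thm2} in place of Theorem~\ref{thm1}.

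More concretely, given $A\subset\Z$ with $|A|=n$, I would first reduce (after a dilation) to the case $A\subset[N]$ for some $N$ polynomial, or at worst mildly exponential, in $n$. Following \cite{sumfree}, I would then split according to the additive energy of $A$. In the low-energy regime, a random subset of $A$ of the target size $s=(\log n)(\log\log n)^{1/2-o(1)}$ is sumfree with positive probability by a standard deletion argument that controls the expected number of Schur triples $(a,a',a'')\in A^{3}$ with $a+a'=a''$. In the high-energy regime, a Balog--Szemer\'edi--Gowers-type reduction locates a large portion of $A$ inside a generalized arithmetic progression, which is then projected onto a genuine interval $[M]$; applying Theorem~\ref{thm2} to this dense subset produces a nontrivial $s$-configuration, and the SSV argument converts such a configuration into a sumfree subset of $A$ of size $\Omega(s)$.

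The exponent $1/2-o(1)$ emerges by optimizing $s$ against the density threshold
\[ 100\left(\frac{\log\log M}{\log M}\right)^{1/6s(s+1)} \]
from Theorem~\ref{thm2}: one balances the density of the projection in $[M]$ against this quantity and solves for the largest $s$ that survives. I expect the main obstacle to be parameter bookkeeping rather than new mathematics, namely verifying that the intermediate losses in Balog--Szemer\'edi--Gowers, in the projection step, and in any density-increment or Bohr-set steps hidden inside the SSV argument do not erode $\log M$ so badly as to overwhelm the gain built into Theorem~\ref{thm2}. Because the logical skeleton of \cite{sumfree} is unchanged, this amounts to a mechanical re-run of their calculation with $(\log\log N)^{-c(s)}$ replaced throughout by $(\log\log N/\log N)^{c'(s)}$, yielding the claimed bound $(\log n)(\log\log n)^{1/2-o(1)}$.
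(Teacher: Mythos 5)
Your high-level plan (run the Sudakov--Szemer\'edi--Vu machinery with Theorem~\ref{thm2} in place of Theorem~\ref{thm1}) is the right starting point, but there is a genuine gap: a ``mechanical re-run'' of the SSV calculation does \emph{not} give the exponent $1/2-o(1)$; it only gives $|B|\geq(\log n)(\log\log n)^{c}$ for some small $c>0$, which is exactly what the introduction of the paper says happens if one changes nothing else. The missing idea is in the extraction step. SSV (and Dousse) pass from the small-doubling set $Y_3$ produced by their argument (Proposition~\ref{sumfreep}: $|Y_3+Y_3|\ll h^{181}|Y_3|$ and $2\cdot Y_3$ disjoint from $X\cup Y$) to a dense subset of an interval via Freiman's theorem, and your sketch does the same via a BSG-plus-GAP-plus-projection route. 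Either way the density of the resulting subset of $[M]$ is only $\exp(-\mathrm{poly}(h))$, and when you balance $\log(1/\delta)$ against the exponent $1/6h(h+1)$ in Theorem~\ref{thm2} you get $\log\log M\gg h^{C}$ for a large power $C$, i.e.\ only $h\approx(\log\log n)^{c}$. The paper instead uses Ruzsa's embedding (modeling) lemma (Lemma~\ref{embedding}): $Y_3$ has a half-sized subset Freiman isomorphic to a subset of $\Z/N\Z$ with $N\ll h^{362}|Y_3|$, so the density is \emph{polynomial} in $h$, namely $\delta\gg h^{-362}$. Then the condition from Theorem~\ref{thm2} becomes $h^{2}\log h\ll\log\log N$, which is what yields $F(h)=\exp(\exp(Ch^{2}\log h))$ and hence $h\approx(\log\log n)^{1/2-o(1)}$. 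Without this substitution your own optimization cannot produce the claimed exponent.

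A secondary issue: your proposed internal structure of SSV (additive-energy dichotomy, deletion in the low-energy case, BSG in the high-energy case) is not what SSV actually do, and as described it does not deliver the key property that makes the configuration step work. If you find an $h$-configuration $\{n_i+n_j+a\}$ inside some dense piece $A'\subset A$, the candidate sumfree set is $Z=\{2n_i+a\}$, whose pairwise sums lie in $2\cdot A'$; you need $2\cdot A'$ to be disjoint from all of $A$ (or from $X\cup Y$ in the two-set formulation), and a BSG subset carries no such disjointness. This is precisely what SSV's construction of $Y_3$ provides, and the paper simply quotes it as Proposition~\ref{sumfreep} rather than re-deriving it. So the correct fix is: keep Propositions~\ref{sumfreeh} and~\ref{sumfreep} as in SSV, but insert the Ruzsa embedding lemma before applying Theorem~\ref{thm2}.
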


Here $o(1)$ denotes a quantity that goes to zero as $n\rightarrow\infty$. For a more detailed discussion on this problem we refer the reader to \cite{sumfree}, where the first superlogarithmic bound on $|B|$ is obtained. The bound $|B|\geq (\log n)(\log\log\log
n)^c$ is obtained by Dousse \cite{dousse} using Theorem \ref{thm1}. In exactly the same way, Theorem
\ref{thm2} immediately gives the bound $|B|\geq (\log n)(\log\log
n)^c$ for some small positive constant $c>0$. The improvement of the exponent $c$ is a consequence of using the Ruzsa embedding lemma instead of Freiman's
theorem to extract a dense subset of a long interval from a set with small doubling.

The rest of the paper is organized as follows. In Section
\ref{sec:bohr} we collect basic properties of Bohr sets, a
multi-dimensional analogue of arithmetic progressions. Theorem
\ref{thm2} is then proved in Sections \ref{sec:u2loc} to
\ref{sec:proof2}. Section \ref{sec:u2loc} develops the theory of a
local analogue of Gowers $U^2$-norm; this captures the intuition of
performing Fourier analysis on Bohr sets (approximate groups)
instead of genuine groups. In Section \ref{sec:gvn-loc} we establish
a dichotomy which either guarantees the existence of a nontrivial
$s$-configuration or leads to a large local $U^2$-norm. Finally in
Section \ref{sec:proof2} we obtain a density increment from the
large local $U^2$-norm and complete the proof of Theorem \ref{thm2}.
The last section contains the proof of Corollary \ref{sumfree}.

\smallskip

{\bf Acknowledgements.} The author is grateful to Ben Green and
Terry Tao for some helpful discussions.

\section{Bohr Sets: Basic Properties}\label{sec:bohr}

In this section we record some basic properties of Bohr sets, all of
which can be found in standard texts in additive combinatorics such
as \cite{tao-vu}.

\begin{definition}[Bohr sets]\label{def:bohr}
Let $\theta=(\theta_1,\cdots,\theta_d)\in\R^d$,  $0<\epsilon<1/2$,
and $M\geq 1$. Define the Bohr set
$\Lambda=\Lambda_{\theta,\epsilon,M}$ to be the set of all $n\in\Z$
with $|n|\leq M$ and $\|n\theta_j\|\leq\epsilon$ for each $1\leq
j\leq d$ (here $\|x\|$ denotes the distance from $x$ to its nearest integer). The positive integer $d$ is called the dimension of
$\Lambda$. For any real number $c>0$, we write $c\Lambda$ for the
dilated Bohr set $c\Lambda=\Lambda_{\theta,c\epsilon,cM}$.
\end{definition}

For example, the interval $[-M,M]$ is the $1$-dimensional Bohr set
$\Lambda_{1,1/2,M}$. A simple heuristic argument shows that an
integer $n\in [-M,M]$ lies in a $d$-dimensional Bohr set
$\Lambda_{\theta,\epsilon,M}$ with probability $\epsilon^d$, and
thus $\Lambda_{\theta,\epsilon,M}$ has expected size roughly
$\epsilon^dM$. This is indeed correct as a lower bound.

\begin{lemma}[Size bound]\label{bohrlow}
Let $\Lambda=\Lambda_{\theta,\epsilon,M}$ be a Bohr set of dimension
$d$. Then $|\Lambda|\geq\epsilon^dM$.
\end{lemma}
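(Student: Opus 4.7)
The plan is the standard Dirichlet-type pigeonhole argument on the torus. Define the map
\[
\phi : \{0, 1, \ldots, \lfloor M \rfloor\} \to (\R/\Z)^d,\qquad \phi(n) = (n\theta_1, \ldots, n\theta_d) \bmod 1,
\]
and partition $(\R/\Z)^d$ into $K^d$ axis-aligned boxes of side length $1/K \leq \epsilon$, taking $K = \lceil 1/\epsilon \rceil$. Since the domain of $\phi$ has size $\lfloor M \rfloor + 1$, pigeonhole yields a box whose preimage contains a set $S$ of integers with $|S| \geq (\lfloor M \rfloor + 1)/K^d$.

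The next step is to convert $S$ into elements of $\Lambda$ by differencing with its minimum. For every $n \in S$, the integer $n - \min S$ lies in $[0, M]$ and satisfies $\|(n - \min S)\theta_j\| \leq 1/K \leq \epsilon$ for each $j$, so it belongs to $\Lambda$. Ranging $n$ over $S$ produces $|S|$ distinct such elements, giving $|\Lambda| \geq (\lfloor M \rfloor + 1)/K^d$. Using $\epsilon < 1/2$ to absorb the rounding gap between $\lceil 1/\epsilon \rceil$ and $1/\epsilon$ then yields the stated bound $|\Lambda| \geq \epsilon^d M$.

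There is no substantive obstacle; the content of the proof is a single pigeonhole, and the only detail requiring a moment's care is verifying that the rounding factor $\lceil 1/\epsilon \rceil/(1/\epsilon)$ can be absorbed into the hypothesis $\epsilon < 1/2$ so that the final inequality emerges cleanly in the form $\epsilon^d M$.
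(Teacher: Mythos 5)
Your pigeonhole setup and the differencing-with-the-minimum step are fine, but the final quantitative step does not go through, and the hypothesis $\epsilon<1/2$ cannot rescue it. What your argument actually proves is $|\Lambda|\ge(\lfloor M\rfloor+1)/\lceil 1/\epsilon\rceil^{d}$. Comparing with the claimed bound $\epsilon^{d}M$, the ratio of the two is $\frac{\lfloor M\rfloor+1}{M}\cdot\bigl(\epsilon\lceil 1/\epsilon\rceil\bigr)^{-d}$; the first factor is at most $2$, while $\epsilon\lceil 1/\epsilon\rceil$ equals $1$ only when $1/\epsilon$ is an integer and in general can be as large as roughly $1+\epsilon$. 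The loss therefore compounds exponentially in the dimension: for instance with $\epsilon=0.45$ one has $K=3$ and $(\epsilon K)^{d}=(1.35)^{d}$, so for $d$ moderately large and $M$ large your bound falls short of $\epsilon^{d}M$ by an arbitrarily large factor. The $\epsilon<1/2$ assumption only bounds the per-coordinate rounding loss; it does not remove it, and there is nothing to absorb the $d$-fold product into. Since the paper applies the lemma with $d$ polynomial in $\delta^{-1}$, this is not a harmless constant, and in any case the stated inequality is not established as written.

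The standard repair (this is the argument in Tao--Vu, which the paper cites in place of a proof) is to replace the fixed partition into $\lceil 1/\epsilon\rceil^{d}$ boxes by an average over translates of a single cube of side exactly $\epsilon$. For $t=(t_1,\dots,t_d)$ chosen uniformly on the torus, the expected number of $n\in\{0,1,\dots,\lfloor M\rfloor\}$ with $n\theta_j\bmod 1\in[t_j,t_j+\epsilon)$ for every $j$ is exactly $\epsilon^{d}(\lfloor M\rfloor+1)\ge\epsilon^{d}M$, so some fixed $t$ captures a set $S$ of at least this size. Your differencing step applied to this $S$ works verbatim: any two elements of a cube of side $\epsilon$ satisfy $\|(n-n')\theta_j\|\le\epsilon$, and $0\le n-\min S\le\lfloor M\rfloor\le M$, so the map $n\mapsto n-\min S$ injects $S$ into $\Lambda$ and yields $|\Lambda|\ge\epsilon^{d}M$ with no rounding loss.
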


Bohr sets are approximate groups in the sense that
$|\Lambda+\Lambda|\approx 2^d|\Lambda|$. However, as $d$ gets large
the doubling also gets large. To get around this unpleasant
behavior, we will work with pairs of Bohr sets $(\Lambda,c\Lambda)$
for some small constant $c\in (0,1)$. Heuristically we expect that
$|\Lambda+c\Lambda|\approx |\Lambda|$. This leads to the following
definition.

\begin{definition}[Regular Bohr sets]\label{def:reg}
A Bohr set $\Lambda=\Lambda_{\theta,\epsilon,M}$ of dimension $d$ is
said to be regular if
\[ 1-100d|c|\leq\frac{|(1+c)\Lambda|}{|\Lambda|}\leq 1+100d|c| \]
whenever $|c|\leq 1/100d$.
\end{definition}

Not all Bohr sets are regular. For example, consider
$\Lambda=\Lambda_{1/2,0.499,M}$, and observe that while $\Lambda$
contains only even integers in $[-M,M]$, the dilated Bohr set $1.01\Lambda$ contains all
integers in $[-M,M]$. On the other hand, it is not hard to see that
$\Lambda_{1/2,\epsilon,M}$ is regular when $\epsilon$ is not too close to
$0.5$. In general, the following lemma shows that regular Bohr sets
exist in abundance.

\begin{lemma}[Finding regular Bohr sets]\label{findreg}
For any Bohr set $\Lambda$, there exists $\alpha\in [1/2,1]$ such
that $\alpha\Lambda$ is regular.
\end{lemma}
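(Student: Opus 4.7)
The plan is to study the non-decreasing step function $F(\alpha) := \log |\alpha\Lambda|$ on $\alpha>0$. Since $F$ is monotone, regularity of $\alpha\Lambda$ is equivalent to the two one-sided conditions
\[
F(\alpha(1+c)) - F(\alpha) \le \log(1+100dc), \qquad F(\alpha) - F(\alpha(1-c)) \le -\log(1-100dc),
\]
holding for every $c \in [0, 1/(100d)]$. The strategy is to bound the total variation of $F$ on a slightly enlarged interval and then argue that if these inequalities failed at every $\alpha \in [1/2,1]$, the associated local jumps of $F$ would accumulate to more than this total variation.

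A routine box-covering argument for Bohr sets (see~\cite{tao-vu}) gives $|2\Lambda|/|\tfrac{1}{4}\Lambda| \le C^d$ for some absolute constant $C$ (say $C=16$), and hence $F(2)-F(1/4) \le d\log C$. Since $F$ is monotone, the sum of its increments over any disjoint family of subintervals of $[1/4,2]$ is therefore bounded by $d\log C$.

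Now suppose for contradiction that no $\alpha\in[1/2,1]$ is regular, and for each such $\alpha$ pick a witness $c_\alpha \in (0, 1/(100d)]$ of the failure. Setting $I_\alpha := [\alpha, \alpha(1+c_\alpha)]$ or $[\alpha(1-c_\alpha),\alpha]$ according to which of the two inequalities fails, we have $I_\alpha \subset [1/4,2]$, its length $\alpha c_\alpha$ lies in $[c_\alpha/2, c_\alpha]$, and the $F$-increment across $I_\alpha$ is at least $50d\, c_\alpha$ (from $\log(1+x)\ge x/2$ and $-\log(1-x)\ge x$ on $[0,1]$). The family $\{I_\alpha\}_{\alpha\in[1/2,1]}$ covers $[1/2,1]$, so the one-dimensional Vitali covering lemma gives a countable disjoint subfamily $\{I_{\alpha_k}\}$ with $\sum_k |I_{\alpha_k}| \ge \tfrac{1}{3}\cdot\tfrac{1}{2} = \tfrac{1}{6}$. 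Summing increments yields
\[
d\log C \;\ge\; \sum_k 50\,d\,c_{\alpha_k} \;\ge\; 50 d \sum_k |I_{\alpha_k}| \;\ge\; \tfrac{50 d}{6},
\]
where the second inequality uses $c_{\alpha_k} \ge |I_{\alpha_k}|$ (because $\alpha_k \le 1$). This forces $\log C \ge 50/6 \approx 8.33$, contradicting $C=16$.

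The main obstacle is really only the covering estimate $|2\Lambda|/|\tfrac{1}{4}\Lambda|\le C^d$ with a mild absolute constant; everything else is monotonicity bookkeeping and the Vitali lemma. The comfortable gap between $\log 16 \approx 2.77$ and $50/6 \approx 8.33$ means that no careful constant-chasing is required and the argument has plenty of slack.
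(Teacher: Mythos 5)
Your argument is correct and is essentially the standard proof of this lemma: the paper does not prove it but refers to \cite{tao-vu}, where the same idea appears --- the monotone function $\alpha\mapsto\log|\alpha\Lambda|$ has bounded total increase across $[1/4,2]$ by a covering bound, so it cannot fail the Lipschitz-type regularity condition at every $\alpha\in[1/2,1]$; your Vitali-covering packaging replaces the usual pigeonhole but is the same mechanism. One small correction: for Bohr sets as defined here the range condition $|n|\leq M$ behaves like an extra coordinate, so the routine box-covering gives $|2\Lambda|\leq C^{d+1}|\tfrac14\Lambda|$ (e.g.\ $C=16$, covering each arc of length $4\epsilon$ and the interval of length $4M$ by $16$ boxes of sidelength $\epsilon/4$, resp.\ $M/4$), not $C^{d}$ as you assert; since $(d+1)\log 16\leq 2d\log 16\approx 5.6\,d$ is still below your lower bound of roughly $50d/6$, the contradiction survives, but the slack is smaller than advertised, and you should also note that the constant-$3$ Vitali selection for an uncountable family requires the usual reduction to a finite subfamily (or a factor $3+\epsilon$), which again costs nothing here.
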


Suppose that $\Lambda$ is a regular Bohr set of dimension $d$, and
$\Lambda'=c\Lambda$ for some small constant $0<c<1/100d$. As mentioned above, the approximate
group structure of the pair $(\Lambda,\Lambda')$ is crucial in our argument.
For example, let $f:\Z\rightarrow\C$ be a $1$-bounded function (meaning that $|f|\leq 1$), and
consider the average
\[ \E_{n\in n'+\Lambda}f(n) \]
for some $n'\in\Lambda'$. The regularity of $\Lambda$ allows us to
replace the range $n\in n'+\Lambda$ by $n\in\Lambda$, at the cost of
a small error:
\[ \E_{n\in n'+\Lambda}f(n)=\E_{n\in\Lambda}f(n)+O(cd). \]
We will use this type of estimate again and again without
explicitly mentioning it.


\section{The Local Gowers $U^2$-norm}\label{sec:u2loc}

In this section, we define a local Gowers $U^2$-norm and prove an
inverse theorem for it. To motivate the definition, first recall the
usual $U^2$-norm, defined for a $1$-bounded function
$f:G\rightarrow\C$ on a finite abelian group $G$:
\[
\|f\|_{U^2}^4=\E_{a,d_1,d_2\in
G}f(a)\overline{f(a+d_1)f(a+d_2)}f(a+d_1+d_2).
\]
Geometrically $\|f\|_{U^2}$ counts parallelograms weighted by $f$,
with the parameter $a$ representing one of the vertices of the
parallelogram, and $d_1,d_2$ representing the two edges of it.

\begin{definition}[Local Gowers $U^2$-norm]
Let $\Lambda,\Lambda_1=c_1\Lambda,\Lambda_2=c_2\Lambda_1$ be regular
Bohr sets of dimension $d$, where $0<c_1,c_2<1/100d$. For a
$1$-bounded function $f:\Z\rightarrow\C$, its local Gowers
$U^2$-norm with respect to $(\Lambda,\Lambda_1,\Lambda_2)$ is
defined by
\begin{equation}\label{u2loc}
\|f\|_{U^2(\Lambda,\Lambda_1,\Lambda_2)}^4=\E_{a\in
\Lambda}\E_{n_1,n_1'\in \Lambda_1}\E_{n_2,n_2'\in
\Lambda_2}f(a+n_1+n_2)\overline{f(a+n_1+n_2')f(a+n_1'+n_2)}f(a+n_1'+n_2').
\end{equation}
\end{definition}

Roughly speaking, the local Gowers $U^2$-norm counts those parallelograms with
one side restricted to a narrower Bohr set $\Lambda_1$, and the other side
restricted to an even narrower Bohr set $\Lambda_2$.

Clearly the local Gowers $U^2$-norm is always bounded by $1$. We now
turn to proving an inverse theorem, which says that if the local
Gowers $U^2$-norm of a function $f$ is large, then $f$ must possess
some structure.

\begin{theorem}[Inverse theorem for the $U^2$-norm, local
version]\label{u2inverse-loc} Let $0<\eta<1$ be a parameter. Let
$\Lambda$, $\Lambda_1=c_1\Lambda_1$, $\Lambda_2=c_2\Lambda_1$ be
regular Bohr sets of dimension $d$, where $0<c_1\leq\eta^8/5000d$
and $0<c_2\leq\eta^2/400d$. If
$\|f\|_{U^2(\Lambda,\Lambda_1,\Lambda_2)}\geq\eta$, then
\[
\E_{a\in\Lambda}\sup_{y\in\R}\left|\E_{n_2\in\Lambda_2}f(a+n_2)e(n_2y)\right|^2\geq\frac{\eta^8}{40}.
\]
\end{theorem}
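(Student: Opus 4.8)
The plan is to expand the fourth power of the local norm, recognise the result as the Hilbert--Schmidt energy of a family of Gram operators built from windows of $f$, extract from it a single distinguished "local Fourier direction", argue that this direction must essentially be a genuine linear phase, and finally transfer the resulting correlation from the $\Lambda_1$-window around $a$ to the $\Lambda_2$-window demanded by the statement.

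The first step is purely algebraic. Collecting the four factors in \eqref{u2loc} according to whether they carry $n_1$ or $n_1'$ gives the identity
\[ \|f\|_{U^2(\Lambda,\Lambda_1,\Lambda_2)}^4=\E_{a\in\Lambda}\E_{n_1,n_1'\in\Lambda_1}\Bigl|\E_{n_2\in\Lambda_2}f(a+n_1+n_2)\overline{f(a+n_1'+n_2)}\Bigr|^2 . \]
Equip $\C^{\Lambda_2}$ with the normalised inner product $\langle u,u'\rangle=\E_{n_2\in\Lambda_2}u(n_2)\overline{u'(n_2)}$ and set $\psi_{a,n_1}=(f(a+n_1+n_2))_{n_2\in\Lambda_2}$. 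Then the inner average equals $\E_{n_1,n_1'}|\langle\psi_{a,n_1},\psi_{a,n_1'}\rangle|^2=\mathrm{tr}(\rho_a^2)$, where $\rho_a=\E_{n_1\in\Lambda_1}|\psi_{a,n_1}\rangle\langle\psi_{a,n_1}|$ is a positive operator with $\mathrm{tr}(\rho_a)=\E_{n_1}\E_{n_2}|f(a+n_1+n_2)|^2\le1$. Thus $\E_{a\in\Lambda}\mathrm{tr}(\rho_a^2)\ge\eta^4$, and by pigeonholing there is a set $G\subseteq\Lambda$ of density at least $\eta^4/2$ on which $\mathrm{tr}(\rho_a^2)\ge\eta^4/2$.

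For $a\in G$, the bound $\mathrm{tr}(\rho_a^2)\le\|\rho_a\|_{\mathrm{op}}\mathrm{tr}(\rho_a)$ gives $\|\rho_a\|_{\mathrm{op}}\ge\eta^4/2$, so there is a unit vector $v=v_a\in\C^{\Lambda_2}$, lying in the span of the $\psi_{a,n_1}$, with $\E_{n_1\in\Lambda_1}\bigl|\E_{n_2\in\Lambda_2}f(a+n_1+n_2)\overline{v(n_2)}\bigr|^2\ge\eta^4/2$. The heart of the matter, and the step I expect to be the main obstacle, is to upgrade this correlation with the arbitrary test function $v$ into a correlation with an honest linear phase $n_2\mapsto e(-n_2y)$, i.e. to deduce $\E_{n_1\in\Lambda_1}\sup_{y}\bigl|\E_{n_2\in\Lambda_2}f(a+n_1+n_2)e(-n_2y)\bigr|^2\gg\eta^4$. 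If one merely expands $v$ in the characters and estimates the resulting integral by $\sup_y$ of a local transform, one loses a spurious factor of $|\Lambda_2|$ — a generic unit vector need not be concentrated near any single frequency, whereas the diagonal test functions are — and this factor would be fatal, since the target is an absolute constant. The way around it is to use that $v$ was produced from the windows $\psi_{a,n_1}$ of $f$ itself: a large local $U^2$-norm forces $f$ to resemble, on the $\Lambda_1$-scale around $a$, a single linear phase, so the extremal direction $v$ is close to a character and the phase it selects is the required $y_a$. Making this rigorous is where the regularity of the Bohr sets is used to compare the three scales $\Lambda\supseteq\Lambda_1\supseteq\Lambda_2$, and it is precisely here that the quantitative smallness $c_2\le\eta^2/400d$ (and, for the final transfer, $c_1\le\eta^8/5000d$) is needed, so that the errors incurred stay below the thresholds of order $\eta^2$ and $\eta^8$ at which they enter.

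Finally, averaging the bound $\E_{n_1\in\Lambda_1}\sup_y|\E_{n_2\in\Lambda_2}f(a+n_1+n_2)e(-n_2y)|^2\gg\eta^4$ over $a\in G$ yields $\E_{a\in\Lambda}\E_{n_1\in\Lambda_1}\sup_y|\E_{n_2\in\Lambda_2}f(a+n_1+n_2)e(-n_2y)|^2\gg\eta^8$. Since $b\mapsto\sup_y|\E_{n_2\in\Lambda_2}f(b+n_2)e(-n_2y)|^2$ is $1$-bounded and $\Lambda$ is regular, translating $a$ by $n_1\in\Lambda_1=c_1\Lambda$ changes its $\Lambda$-average by at most $O(c_1d)\le O(\eta^8)$, so the $n_1$-average may be dropped at negligible cost. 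Tracking the constants through pigeonholing, the operator-norm step, and the regularity error gives $\E_{a\in\Lambda}\sup_{y\in\R}|\E_{n_2\in\Lambda_2}f(a+n_2)e(-n_2y)|^2\ge\eta^8/40$, which is the assertion; the sign of the exponential is irrelevant since the supremum runs over all $y\in\R$.
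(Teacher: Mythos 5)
Your opening reduction is fine: the identity $\|f\|_{U^2(\Lambda,\Lambda_1,\Lambda_2)}^4=\E_{a\in\Lambda}\E_{n_1,n_1'\in\Lambda_1}\bigl|\E_{n_2\in\Lambda_2}f(a+n_1+n_2)\overline{f(a+n_1'+n_2)}\bigr|^2$ is correct, the pigeonholing to a set of $a$'s of density $\eta^4/2$ matches the paper, the Gram-operator bookkeeping ($\mathrm{tr}(\rho_a^2)\le\|\rho_a\|_{\mathrm{op}}\mathrm{tr}(\rho_a)$, hence a unit vector $v_a$ with $\E_{n_1\in\Lambda_1}|\langle\psi_{a,n_1},v_a\rangle|^2\ge\eta^4/2$) is sound, and your final transfer (averaging over $a$, then dropping the $n_1$-shift at cost $O(c_1 d)$ by regularity) is exactly how the paper concludes. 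But the step you yourself flag as ``the heart of the matter'' --- upgrading correlation with the arbitrary top eigenvector $v_a$ to correlation with a genuine linear phase $n_2\mapsto e(n_2y)$ --- is not proved; it is asserted. The justification you offer (``a large local $U^2$-norm forces $f$ to resemble, on the $\Lambda_1$-scale around $a$, a single linear phase, so the extremal direction $v$ is close to a character'') is precisely the content of the inverse theorem being proved, so as written the argument is circular. In the genuine group setting this step is free because the Gram operator is translation-invariant and its eigenvectors are exactly the characters; in the local setting $\Lambda_2$ is not a group, $\rho_a$ does not commute with translations, and there is no obvious reason its top eigenvector is close to any character --- establishing an approximate statement of that kind is essentially as hard as the theorem itself (compare Remark \ref{rem:locdefn}, where the analogous difficulty for the ``all scales equal'' definition required a significantly more involved argument of Green--Tao). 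So there is a genuine gap: the mechanism that actually produces the frequency $y(a)$ is missing.

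For contrast, the paper never passes through an extremal eigenvector. For each good $a$ it takes $f$ supported on $a+\Lambda_1+\Lambda_2$, completes the $n_1,n_1'$-averages to sums over all of $\Z$ (the error from pairs with $n_1\in\Lambda_1+2\Lambda_2\setminus\Lambda_1$ is where $c_2\le\eta^2/400d$ is used), and rewrites the completed sum on the Fourier side as $\int_0^1\int_0^1|\hat{\Lambda}_2(x)|^2|\hat{f}(y)|^2|\hat{f}(x+y)|^2\,dx\,dy$. A single frequency $y=y(a)$ is then extracted by bounding this by $\bigl(\int_0^1|\hat f(y)|^2dy\bigr)\cdot\sup_y\int_0^1|\hat\Lambda_2(x)|^2|\hat f(x+y)|^2dx$ and using Parseval, and undoing the Fourier transform identifies $\int_0^1|\hat\Lambda_2(x)|^2|\hat f(x+y)|^2dx$ with $\sum_n|(\Lambda_2*f_y)(n)|^2$, i.e.\ with exactly the local correlation $|\E_{n_2\in\Lambda_2}f(n+n_2)e(n_2y)|^2$ averaged over the $\Lambda_1$-window. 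That completion-plus-$\sup$/Parseval device is the concrete replacement for your ``$v_a$ is close to a character'' claim; without it (or something of comparable substance) your proposal does not close.
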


When $\Lambda,\Lambda_1,\Lambda_2$ are replaced by a finite abelian
group $G$, we recover the following inverse theorem for the usual
Gowers $U^2$-norm: if $\|f\|_{U^2}\geq\eta$ for a $1$-bounded
function $f:G\rightarrow\C$, then $|\hat{f}(\gamma)|\gg\eta^4$ for
some character $\gamma\in\hat{G}$. This is slightly weaker than the
usual inverse theorem which says that $|\hat{f}(\gamma)|\gg\eta^2$.
An improvement in the exponent will lead to an improvement in the
exponent in the final bound in Theorem \ref{thm2} as well. This is, however, not our major concern.

\begin{proof}
By the definition \eqref{u2loc}, for at least $\eta^4|\Lambda|/2$
values of $a\in\Lambda$ we have
\begin{equation}\label{largeu2}
\E_{n_1,n_1'\in\Lambda_1}\E_{n_2,n_2'\in\Lambda_2}f(a+n_1+n_2)\overline{f(a+n_1+n_2')f(a+n_1'+n_2)}f(a+n_1'+n_2')\geq\frac{\eta^4}{2}.
\end{equation}
Fix such an $a$ and we will work with condition \eqref{largeu2}. We
may assume that $f$ is supported on the set $a+\Lambda_1+\Lambda_2$,
whose size is bounded by $|\Lambda_1|(1+100dc_2)$. Consider
\begin{equation}\label{u2sum}
\sum_{n_1,n_1'\in\Z}\sum_{n_2,n_2'\in\Lambda_2}f(a+n_1+n_2)\overline{f(a+n_1+n_2')f(a+n_1'+n_2)}f(a+n_1'+n_2').
\end{equation}
The contributions from $n_1,n_1'\in\Lambda_1$ are at least $\eta^4
|\Lambda_1|^2|\Lambda_2|^2/2$ by \eqref{largeu2}. If
$n_1,n_1'\notin\Lambda_1$, then we must have
$n_1,n_1'\in\Lambda_1+2\Lambda_2$ in order for the summand not to
vanish. There are at most $(200dc_2)^2|\Lambda_1|^2$ such pairs
$(n_1,n_1')$. Hence \eqref{u2sum} is at least
\[
[\eta^4/2-(200dc_2)^2]|\Lambda_1|^2|\Lambda_2|^2\geq\frac{\eta^4}{4}|\Lambda_1|^2|\Lambda_2|^2,
\]
provided that $c_2\leq\eta^2/400d$.

On the other hand, we claim that \eqref{u2sum} is equal to
\[ \int_0^1\int_0^1
|\hat{\Lambda}_2(x)|^2|\hat{f}(y)|^2|\hat{f}(x+y)|^2dxdy. \] Indeed,
the integral above can be written as
\[
\sum_{n_2,n_2'\in\Lambda_2}\sum_{r,s,t,u\in\Z}f(r)\overline{f(s)}f(t)\overline{f(u)}\int_0^1\int_0^1e((n_2-n_2')x+(r-s)y+(t-u)(x+y))dxdy.
\]
By orthogonality the integral above vanishes unless $u-t=n_2-n_2'$
and $r+t=s+u$. After making the change of variables $n_1=r-a-n_2$
and $n_1'=u-a-n_2$, it is evident that the expression above equals
\eqref{u2sum}.

It then follows that
\begin{align*}
\frac{\eta^4}{4}|\Lambda_1|^2|\Lambda_2|^2 &\leq\int_0^1\int_0^1
|\hat{\Lambda}_2(x)|^2|\hat{f}(y)|^2|\hat{f}(x+y)|^2dxdy \\
&\leq \int_0^1|\hat{f}(y)|^2\cdot\sup_y
\int_0^1|\hat{\Lambda}_2(x)|^2|\hat{f}(x+y)|^2dx \\
&\leq |\Lambda_1|(1+100dc_2)\sup_y
\int_0^1|\hat{\Lambda}_2(x)|^2|\hat{f}(x+y)|^2dx.
\end{align*}
Hence there exists $y=y(a)\in\R$ such that
\begin{equation}\label{largef}
\int_0^1|\hat{\Lambda}_2(x)|^2|\hat{f}(x+y)|^2dx\geq\frac{\eta^4}{5}|\Lambda_1||\Lambda_2|^2.
\end{equation}
If we write $f_y$ for the function $f_y(a)=f(a)e(ay)$, then the left
side above is
\begin{equation}\label{largef2} \|\hat{\Lambda}_2\cdot\hat{f}_y\|_2^2=\|\widehat{\Lambda_2*f_y}\|_2^2=\sum_{n\in\Z}|(\Lambda_2*f_y)(n)|^2=\sum_{n\in a+\Lambda_1+2\Lambda_2}|(\Lambda_2*f_y)(n)|^2.
\end{equation}
Observe that
\[ |(\Lambda_2*f_y)(n)|=\left|\sum_{n_2\in\Lambda_2}f_y(n+n_2)\right|=\left|\sum_{n_2\in\Lambda_2}f(n+n_2)e(n_2y)\right|. \]
Combining this with \eqref{largef} and \eqref{largef2} we get
\[ \E_{n\in a+\Lambda_1+2\Lambda_2}|\E_{n_2\in\Lambda_2}f(n+n_2)e(n_2y)|^2\geq \frac{\eta^4}{5}\cdot\frac{|\Lambda_1|}{|\Lambda_1+2\Lambda_2|} \geq\frac{\eta^4}{10}. \]
Recall that the above is true for at least $\eta^4|\Lambda|/2$
values of $a\in\Lambda$. We have thus shown that
\begin{equation}\label{largef3}
\E_{a\in\Lambda}\E_{n\in
a+\Lambda_1+2\Lambda_2}\sup_{y\in\R}\left|\E_{n_2\in\Lambda_2}f(n+n_2)e(n_2y)\right|^2\geq\frac{\eta^{8}}{20}.
\end{equation}
After changing the order of summation over $a$ and $n$ the left side
above is at most
\[ \E_{n\in\Lambda}\sup_{y\in\R}\left|\E_{n_2\in\Lambda_2}f(n+n_2)e(n_2y)\right|^2+300dc_1. \]
The proof is completed by combining this with \eqref{largef3}.
\end{proof}

\begin{remark}\label{rem:locdefn}
In the proof above it is crucial that the variables $n_2,n_2'$ are
restricted to an even narrower Bohr set than the variables
$n_1,n_1'$. One might think that a more natural way to define the
local Gowers $U^2$-norm is by restricting $n_1,n_2,n_1',n_2'$ to the
same Bohr set. An inverse theorem for this definition is obtained by
Green-Tao \cite{personal}, but with a significantly more involved argument. We
will be content with our slightly less orthodox definition
\eqref{u2loc}.
\end{remark}


\section{A Local Generalized von-Neumann Theorem}\label{sec:gvn-loc}

Fix a positive integer $s\geq 2$. We will be looking for
$s$-configurations $\{n_i+n_j+a:1\leq i\leq j\leq s\}$, where
$a\in\Lambda$ and $n_i\in\Lambda_i$. Here
$\Lambda,\Lambda_1,\cdots,\Lambda_s$ are all regular Bohr sets of
dimension $d$, and $\Lambda_1=c_1\Lambda$,
$\Lambda_i=c_i\Lambda_{i-1}$ for $2\leq i\leq s$, where
$0<c_1,\cdots,c_s<1/100d$. To this end, define the counting function
\[
T_s(\mathcal{F};\Lambda,\Lambda_1,\cdots,\Lambda_s)=\E_{a\in\Lambda}\E_{n_1\in\Lambda_1}\cdots\E_{n_s\in\Lambda_s}\prod_{1\leq
i\leq j\leq s}f_{ij}(n_i+n_j+a), \] where $\mathcal{F}=\{f_{ij}\}$
is a collection of $1$-bounded functions.

\begin{proposition}[Generalized von-Neumann, local version]\label{u2control-loc} For any $1\leq
i<j\leq s$ we have
$|T_s(\mathcal{F};\Lambda_1,\cdots,\Lambda_s)|\leq\|f_{ij}\|_{U^2(\Lambda,\Lambda_i,\Lambda_j)}$.
\end{proposition}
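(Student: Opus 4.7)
The plan is a standard two-step Cauchy--Schwarz to isolate the factor $f_{ij}$. Because the Bohr-set restrictions $n_k\in\Lambda_k$ stay fixed throughout---we only duplicate variables, never shift them---no regularity estimates will be needed, and the proof is combinatorially identical to the classical generalized von Neumann theorem.

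First I would fix $1\le i<j\le s$ and regroup the product $\prod_{k\leq\ell}f_{k\ell}(n_k+n_\ell+a)$ as
\[
F(a,\vec{n}_{-ij})\cdot G_i(a,\vec{n}_{-ij},n_i)\cdot G_j(a,\vec{n}_{-ij},n_j)\cdot f_{ij}(n_i+n_j+a),
\]
where $\vec{n}_{-ij}$ denotes the tuple $(n_k)_{k\neq i,j}$, $F$ is the product of those $f_{k\ell}$ with $\{k,\ell\}\cap\{i,j\}=\emptyset$, and $G_i$ (respectively $G_j$) collects the factors depending on $n_i$ (respectively $n_j$) but not the other. All four factors are $1$-bounded.

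Next I would perform Cauchy--Schwarz twice, once to duplicate $n_i$ and once to duplicate $n_j$. For the first application, bundle $F\cdot G_j$ into a single $1$-bounded function of $(a,\vec{n}_{-ij},n_j)$ and Cauchy--Schwarz in those outer variables:
\[
|T_s|^2\leq \E_{a,\vec{n}_{-ij},n_j}\Bigl|\E_{n_i\in\Lambda_i}G_i(a,\vec{n}_{-ij},n_i)\,f_{ij}(a+n_i+n_j)\Bigr|^2.
\]
Expanding the square duplicates $n_i$ to $(n_i,n_i')\in\Lambda_i^2$. The resulting factor $G_i(\cdots,n_i)\overline{G_i(\cdots,n_i')}$ is $1$-bounded in the outer variables $(a,\vec{n}_{-ij},n_i,n_i')$, so a second Cauchy--Schwarz in these variables---this time to eliminate $G_i\overline{G_i}$ and duplicate $n_j$ to $(n_j,n_j')\in\Lambda_j^2$---yields
\[
|T_s|^4\leq \E_{a\in\Lambda}\E_{\vec{n}_{-ij}}\E_{n_i,n_i'\in\Lambda_i}\E_{n_j,n_j'\in\Lambda_j}f_{ij}(a+n_i+n_j)\overline{f_{ij}(a+n_i'+n_j)f_{ij}(a+n_i+n_j')}f_{ij}(a+n_i'+n_j').
\]
The integrand is independent of $\vec{n}_{-ij}$, so that average is trivial, and the right-hand side is precisely $\|f_{ij}\|_{U^2(\Lambda,\Lambda_i,\Lambda_j)}^4$. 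Taking fourth roots completes the proof.

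The only step requiring care is the bookkeeping at each Cauchy--Schwarz---one must verify that the factor being absorbed genuinely depends only on the outer variables---but this is straightforward given the regrouping in the first step. I do not expect any genuine obstacle, precisely because the ranges of the $n_k$'s are not altered at any point, so the argument sidesteps the delicate regularity issues that appear elsewhere in the local Fourier-analytic setup.
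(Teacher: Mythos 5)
Your proposal is correct and takes essentially the same route as the paper: two applications of Cauchy--Schwarz, first duplicating $n_i$ and then $n_j$, with every factor other than $f_{ij}$ treated as a $1$-bounded weight in the outer variables, and no regularity input needed. The only cosmetic difference is that the paper bounds the auxiliary factors by $1$ immediately after expanding the first square, whereas you carry $G_i\overline{G_i}$ along and remove it in the second Cauchy--Schwarz; both yield exactly $|T_s|^4\leq\|f_{ij}\|_{U^2(\Lambda,\Lambda_i,\Lambda_j)}^4$.
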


If one wants to control $T_s(\mathcal{F})$ by $f_{ii}$, it seems
that a different definition of the local $U^2$-norm is needed (see Remark \ref{rem:locdefn}.
Fortunately it is enough to have the result for $i\neq j$.

\begin{proof}
For notational convenience assume that $(i,j)=(1,2)$; the other
cases are treated in the same way. We write
\[
T_s(\mathcal{F})=\E_{a\in\Lambda}\E_{n_2\in\Lambda_2}\cdots\E_{n_s\in\Lambda_s}\prod_{2\leq
i\leq j\leq
s}f_{ij}(n_i+n_j+a)\E_{n_1\in\Lambda_1}\prod_{j=1}^sf_{1j}(n_1+n_j+a).
\]
By Cauchy-Schwarz,
\[
|T_s(\mathcal{F})|^2\leq\E_{a\in\Lambda}\E_{n_2\in\Lambda_2}\cdots\E_{n_s\in\Lambda_s}\left|\E_{n_1\in\Lambda_1}\prod_{j=1}^sf_{1j}(n_1+n_j+a)\right|^2.
\]
Expanding the square and rearranging the order of summation we get
\[ |T_s(\mathcal{F})|^2\leq
\E_{a\in\Lambda}\E_{n_1,n_1'\in\Lambda_1}\left|\E_{n_2\in\Lambda_2}f_{12}(n_1+n_2+a)\overline{f_{12}(n_1'+n_2+a)}\right|.
\]
Apply Cauchy-Schwarz again to get
\[
|T_s(\mathcal{F})|^4\leq\E_{a\in\Lambda}\E_{n_1,n_1'\in\Lambda'}\left|\E_{n_2\in\Lambda'}f_{12}(n_1+n_2+a)\overline{f_{12}(n_1'+n_2+a)}\right|^2.
\]
The right side above, after expanding out the square, is exactly
$\|f\|_{U^2(\Lambda,\Lambda_1,\Lambda_2)}^4$.
\end{proof}

For a Bohr set $\Lambda$, we write $2\cdot\Lambda$ for the set $\{2n:n\in\Lambda\}$. Note that this is contained in, but usually much smaller than the dilated set $2\Lambda$.

\begin{corollary}[Dichotomy, local version]\label{dich-loc}
Let $s\geq 2$ be a positive integer. Let $\Lambda$ be a regular Bohr
set of dimension $d$ and $A\subset\Lambda$ be a subset with
$|A|=\delta |\Lambda|$. Let $0<c_1,\cdots,c_s<1/100d$ be real
parameters with $c_1\leq\delta^s/3200ds^2$. Set
$\Lambda_1=c_1\Lambda$, $\Lambda_i=c_i\Lambda_{i-1}$ for $2\leq
i\leq s$. Suppose that $A$ does not contain any nontrivial
$s$-configurations. Then either one of the following statements
holds:
\begin{enumerate}
\item $|\Lambda_s|\leq 32s^2\delta^{-\binom{s+1}{2}}$;
\item The density of $A$ on $a+2\cdot\Lambda_i$ is at least
$(1+1/8s^2)\delta$ for some $a\in\Lambda$ and $1\leq i\leq s$ with
$a+2\cdot\Lambda_i\subset\Lambda$;
\item $\|1_A-\delta
1_{\Lambda}\|_{U^2(\Lambda,\Lambda_i,\Lambda_j)}\geq\delta^{\binom{s+1}{2}}/32s^2$
for some $1\leq i<j\leq s$.
\end{enumerate}
\end{corollary}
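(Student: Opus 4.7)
Proof Plan.

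Set $g = 1_A - \delta 1_\Lambda$ and $m = \binom{s+1}{2}$. I will argue by contradiction: assume (1), (2), (3) all fail, and show $T_s(1_A, \ldots, 1_A) > \delta^m/32$. This contradicts the hypothesis, because if $A$ contains no nontrivial $s$-configuration, $T_s(1_A, \ldots, 1_A)$ picks up contributions only from ``trivial'' tuples where some $n_i = n_j$, and the density of such tuples is at most $\sum_{i<j} 1/|\Lambda_i| \le s^2/|\Lambda_s|$, which is $< \delta^m/32$ precisely when (1) fails.

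Expand $T_s(1_A, \ldots, 1_A)$ by telescoping the substitution $1_A = \delta 1_\Lambda + g$ through the $m$ positions, processing off-diagonal positions first and diagonal positions last. This yields the main term $\delta^m T_s(1_\Lambda, \ldots, 1_\Lambda)$ plus $m$ error terms, each with exactly one $g$ factor and the rest $1$-bounded. Regularity of $\Lambda$ combined with $c_1 \le \delta^s/3200 d s^2$ shows the main term is $\ge \delta^m/2$. For each of the $\binom{s}{2}$ error terms with $g$ at an off-diagonal position $(i,j)$, Proposition \ref{u2control-loc} gives an upper bound by $\|g\|_{U^2(\Lambda, \Lambda_i, \Lambda_j)}$; assuming (3) fails, each is $< \delta^m/32 s^2$, so the off-diagonal sum is $\le \binom{s}{2}\delta^m/32 s^2 \le \delta^m/16$.

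The delicate case is the $s$ error terms with $g$ at a diagonal position $(i_0,i_0)$. After using regularity to absorb the $1_\Lambda$ factors into $1$'s (at an overall cost $\lesssim \delta^m/1600$ per term, controlled by $c_1 \le \delta^s/3200 d s^2$), such a term factors as
\[ \delta^{\binom{s}{2}+p}\,\E_{a\in\Lambda}\, g'_{i_0}(a) \prod_{i \in L} F_i(a), \]
where $p$ counts previously-processed diagonals, $L$ is the set of $|L|=s-p-1$ later diagonals, $g'_{i_0}(a) = \E_{n \in \Lambda_{i_0}} g(a+2n)$, and $F_i(a) = \E_{n \in \Lambda_i} 1_A(a+2n)$ is the density of $A$ on $a + 2\cdot\Lambda_i$. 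Failure of (2) gives two bounds on the set of $a$ with $a+2\cdot\Lambda_i \subset \Lambda$ for all $i$ (a set of measure $\ge (1-O(sdc_1))|\Lambda|$): (a) $F_i(a) \le (1+1/8s^2)\delta$, so $\prod_{i\in L}F_i(a) \le (1+1/8s^2)^{|L|}\delta^{|L|} \le 2\delta^{|L|}$, using $(1+1/8s^2)^s \le e^{1/8s} < 2$; and (b) $g'_{i_0}(a) \le \delta/8s^2$ pointwise. Combined with the cancellation $\E_a g'_{i_0}(a) = O(dc_1)$ (via the change of variable $a \mapsto a+2n$ and regularity, since $\E g=0$), the identity $\E|X|=2\E X_+$ for mean-zero $X$ yields $\E_a|g'_{i_0}(a)| \lesssim \delta/s^2$. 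Multiplying through and accounting for the small edge measure bounds each diagonal term by $O(\delta^m/s^2)$, so their total contribution is $O(\delta^m/s) \le \delta^m/4$.

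Combining all estimates, $T_s(1_A, \ldots, 1_A) \ge \delta^m/2 - \delta^m/16 - \delta^m/4 > \delta^m/32$, the desired contradiction. The main obstacle is the diagonal-term analysis: one must simultaneously exploit the pointwise density bound from failure of (2), the mean-zero cancellation of $g$ coming from regularity, and prevent $\prod_{i\in L}F_i$ from picking up an exponential-in-$s$ factor, which is secured via $(1+1/8s^2)^s \le e^{1/8s} < 2$.
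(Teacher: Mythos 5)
Your plan is correct in substance, and it diverges from the paper's proof in the one place where this corollary has real content, namely the diagonal positions. Both arguments start identically: bound $T_s(1_A)$ above by the trivial-configuration count $s^2/|\Lambda_s|<\delta^{\binom{s+1}{2}}/32$ when (1) fails, substitute $1_A=\delta 1_\Lambda+g$, and control every error term carrying $g$ at an off-diagonal position by Proposition \ref{u2control-loc}. The difference is that the paper telescopes \emph{only} the $\binom{s}{2}$ off-diagonal entries, leaving $1_A$ on the diagonal, and then bounds $T_s(\mathcal{G})$ from below: failure of (2) (an upper bound on the densities $\delta_i(a)$) is combined with $\E_a\delta_i(a)\approx\delta$ to show $\delta_i(a)\geq(1-1/s)\delta$ outside exceptional sets $E_i$ of total measure $\leq|\Lambda|/2$, whence $T_s(\mathcal{G})\geq\delta^{\binom{s+1}{2}}/16$. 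You instead telescope through all $\binom{s+1}{2}$ positions (correctly doing diagonals last, so the off-diagonal couplings have already become constants and the $n_i$-averages factorize) and bound each diagonal error term in $L^1$, using failure of (2) as a pointwise bound on the positive part of $g'_{i_0}$ together with the near-mean-zero property coming from regularity; this sidesteps the unavailability of $U^2$-control at diagonal positions exactly as the paper's remark after Proposition \ref{u2control-loc} warns one must. Your route buys a cleaner ``main term $+$ small errors'' structure with no exceptional-set bookkeeping; the paper's route buys slightly less delicate numerology. One caveat on your constants: with the crude chain $\E_a|g'_{i_0}|\,\sup_a\prod_{i\in L}F_i$ via $\E|X|=2\E X_+-\E X$, each diagonal term comes out close to $\delta^{\binom{s+1}{2}}/s^2$, so the total is close to $\delta^{\binom{s+1}{2}}/s$, which for $s=2$ is about $\delta^{\binom{s+1}{2}}/2$, not the claimed $\delta^{\binom{s+1}{2}}/4$; this would cancel your conservative main-term bound of $\delta^{\binom{s+1}{2}}/2$. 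The fix is already inside your framework: either note that the main term is in fact $\geq(1-200dc_1)\delta^{\binom{s+1}{2}}\geq 0.9\,\delta^{\binom{s+1}{2}}$ (since $a\in(1-2c_1)\Lambda$ forces all $n_i+n_j+a\in\Lambda$), or bound $|\E_a g'_{i_0}\prod F_i|\leq\sup_a\bigl(\prod F_i\bigr)\max\bigl(\E(g'_{i_0})_+,\E(g'_{i_0})_-\bigr)$, each factor being $\leq\delta/8s^2$ plus $O(dc_1)$; either repair makes the final inequality hold with room to spare.
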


\begin{proof}
Assume that case (1) fails. Since $A$ does not contain any
non-trivial $d$-configurations, we have
\begin{equation}\label{noc}
T_s(1_A)\leq\frac{s^2}{|\Lambda_s|}\leq\frac{1}{32}\delta^{\binom{s+1}{2}},
\end{equation}
On the other hand, we have the decomposition into
$L+1=\binom{s}{2}+1$ components
\[
T_s(1_A)=T_s(\mathcal{F}_1)+\cdots+T_s(\mathcal{F}_L)+T_s(\mathcal{G}),
\]
such that each $\mathcal{F}_k$ has $1_A-\delta 1_{\Lambda}$ as the
$ij$-component function for some $i<j$, and $\mathcal{G}=(g_{ij})$
satisfies $g_{ij}=\delta 1_{\Lambda}$ for all $i<j$ and
$g_{ii}=1_A$. Using Proposition \ref{u2control-loc} we get
\begin{equation}\label{lu2}
T_s(1_A)\geq T_s(\mathcal{G})-s^2\|1_A-\delta
1_{\Lambda}\|_{U^2(\Lambda,\Lambda_i,\Lambda_j)} \end{equation} for
some $1\leq i<j\leq s$. To estimate $T_s(\mathcal{G})$, note that if
$a\in (1-2c_1)\Lambda$ and $n_i,n_j\in\Lambda_1$, then
$a+n_i+n_j\in\Lambda$. Hence by the regularity of $\Lambda$,
\[
T_s(\mathcal{G})\geq\delta^{\binom{s}{2}}\left[\E_{a\in\Lambda}\prod_{i=1}^s\left(\E_{n\in\Lambda_i}1_A(2n_i+a)\right)-200dc_1\right].
\]
Now write $\delta_i(a)$ for the density of $A$ on
$a+2\cdot\Lambda_i$, and let $E_i$ be the set of $a\in
(1-2c_1)\Lambda$ with $\delta_i(a)<(1-1/s)\delta$. Assume that case
(2) of the conclusion fails, so that $\delta_i(a)\leq
(1+1/8s^2)\delta$ for all $a\in (1-2c_1)\Lambda$. Since
\[ \E_{a\in\Lambda}\delta_i(a)=\E_{n\in\Lambda_i}\E_{a\in\Lambda}1_A(2n+a)\geq\delta-200dc_1\geq (1-1/4s^2)\delta, \]
we have
\[ (1-1/4s^2)\delta\leq
\frac{|E_i|}{|\Lambda|}(1-1/s)\delta+\left(1-\frac{|E_i|}{|\Lambda|}\right)(1+1/8s^2)\delta+200dc_1.
\]
From this it follows that $|E_i|\leq |\Lambda|/2s$. Let $E$ be the
union $E_1\cup\cdots E_s$. Then $|E|\leq |\Lambda|/2$. Hence
\[
T_s(\mathcal{G})\geq\delta^{\binom{s}{2}}\left(\frac{1}{2}\E_{a\in\Lambda\setminus
E}\delta_1(a)\cdots\delta_d(a)-200dc_1\right)\geq\delta^{\binom{s}{2}}\left(\frac{1}{8}\delta^s-200dc_1\right)
\geq\frac{1}{16}\delta^{\binom{s+1}{2}}.
\]
Combining this with \eqref{noc} and \eqref{lu2} we get case (iii) of
the conclusion.

\end{proof}

Compared with the usual dichotomy in Roth's argument, the second
case above is new. However it is certainly harmless as it already
implies a density increment.


\section{Obtaining Density Increment in the Local
Setting}\label{sec:proof2}

The iterative procedure of attacking Theorem \ref{thm2} is the
following. Suppose that $A$ does not contain a nontrivial
$s$-configuration. Then Corollary \ref{dich-loc} implies that there
is a large local Gowers $U^2$-norm. By Theorem \ref{u2inverse-loc}
we thus have a large Fourier coefficient (in the average sense).
Finally, this large Fourier coefficient leads to a density increment
on a smaller Bohr set. This final step of the argument is recorded
in the following lemma.

\begin{lemma}[Large Fourier coefficient leads to density increment, local version]\label{incr-fourier-loc}
Let $0<\eta<1$ be a parameter. Let
$\Lambda=\Lambda_{\theta,\epsilon,M}$, $\Lambda_1=c_1\Lambda$ be
regular Bohr sets of dimension $d$, where $c_1\leq
2^{-15}\eta^3d^{-1}$. Suppose that $\E_{a\in\Lambda}f(a)=0$. If
\[
\E_{a\in\Lambda}\sup_{y\in\R}\left|\E_{n_1\in\Lambda_1}f(a+n_1)e(n_1y)\right|^2\geq\eta^2,
\]
then either one of the following statements holds:
\begin{enumerate}
\item $f$ has density increment on a translate of $\Lambda_1$: $\E_{n_1\in\Lambda_1}f(a+n_1)\geq\eta^3/128$ for some
$a$ with $a+\Lambda_1\subset\Lambda$;
\item for any positive $c'\leq 2^{-13}\eta d^{-1}$, $f$ has density increment on some translate of a $(d+1)$-dimensional Bohr set $\Lambda'=\Lambda_{\theta',\epsilon',M'}$ with $\epsilon'=c'c_1\epsilon$ and $M'=c'c_1M$:
\[ \E_{n'\in\Lambda'}f(a+n')\geq\eta/16 \]
for some $a$ with $a+\Lambda'\subset\Lambda$.
\end{enumerate}
\end{lemma}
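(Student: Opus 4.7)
The plan is to adapt the classical Bourgain dichotomy ``large Fourier coefficient implies density increment'' to our localized setting: a large localized Fourier coefficient of $f$ on a Bohr set will produce an increment either on that same Bohr set (if the offending frequency is trivial modulo the dual of $\Lambda_1$) or on a sub-Bohr set of one higher dimension (if it is genuinely new). Since the hypothesis is an $L^1$-average in $a$ of a supremum in $y$, some pigeonholing must come first.

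Writing $F_y(a):=\E_{n_1\in\Lambda_1}f(a+n_1)e(n_1 y)$, I would first apply Markov's inequality to extract a subset $A_*\subseteq\Lambda$ of density at least $\eta^2/2$ on which $\sup_y|F_y(a)|^2\geq\eta^2/2$ pointwise, and for each $a\in A_*$ record $y(a)\in[0,1)$ and a phase $\alpha(a)\in[0,1)$ realizing
\[
\E_{n_1\in\Lambda_1}\mathrm{Re}\!\left[f(a+n_1)\,e(n_1 y(a)-\alpha(a))\right]\geq \eta/\sqrt{2}.
\]
Then I would pigeonhole the frequency by partitioning $[0,1)$ into $K\asymp c_1 M/\eta$ equal intervals: the Lipschitz estimate $|e(n_1 y)-e(n_1 y')|\leq 2\pi c_1 M|y-y'|$ on $\Lambda_1$ ensures that moving $y(a)$ to the centre $y_0$ of its interval costs at most $\eta/4$. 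This yields a single $y_0$ capturing $y(a)$ for a subset $A_1\subseteq A_*$ of density $\gtrsim\eta^3/(c_1 M)$, on which
\[
\E_{n_1\in\Lambda_1}\mathrm{Re}\!\left[f(a+n_1)\,e(n_1 y_0-\alpha(a))\right]\geq \eta/4.
\]

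Next I would introduce the candidate $(d+1)$-dimensional Bohr set
\[
\Lambda':=\Lambda_{(\theta_1,\ldots,\theta_d,y_0),\,c'c_1\epsilon,\,c'c_1 M},
\]
on which $|e(n'y_0)-1|\leq 2\pi c'c_1\epsilon$. The regularity of $\Lambda_1$ (guaranteed by $c_1<1/100d$), together with $\Lambda'\subseteq c'\Lambda_1$ and $c'\leq 1/100d$, supplies the approximate translation identity
\[
\E_{n_1\in\Lambda_1}[\,\cdot\,]=\E_{n_1\in\Lambda_1}\E_{n'\in\Lambda'}[\,\cdot\text{ at }n_1+n'\,]+O(dc').
\]
Applying it to the inequality above and absorbing $e(n'y_0)\approx 1$ into a factor of $1+O(c'c_1\epsilon)$, the bound rewrites for every $a\in A_1$ as
\[
\E_{n_1\in\Lambda_1}\mathrm{Re}\!\left[e(n_1 y_0-\alpha(a))\,G(a+n_1)\right]\geq\eta/8,\qquad G(b):=\E_{n'\in\Lambda'}f(b+n').
\]

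The crux, and the main obstacle, is converting this into the pointwise positive bound $G(b)\geq\eta/16$ that Case 2 demands. Majorizing the cosine by the modulus only yields $\E_{n_1}|G(a+n_1)|\geq\eta/8$, which must be combined with the consequence of $\E_\Lambda f=0$ that $\E G\approx 0$ on a suitable neighbourhood of $\Lambda$. The plan is to split $G=G^+-G^-$ (with $\E G^+=\E G^-$) and use the $L^1$-lower bound on $|G|$ over $\bigcup_{a\in A_1}(a+\Lambda_1)$ together with the bound $|G|\leq 1$ to force $G^+$ to attain $\eta/16$ somewhere in $\Lambda$. When that route collapses---precisely when the pigeonholed $y_0$ lies so close to the dual of $\Lambda_1$ that $e(n_1 y_0)$ is essentially constant on $\Lambda_1$, making the refinement to $\Lambda'$ redundant---then $F_{y_0}(a)\approx\E_{n_1}f(a+n_1)$ already, and the weaker density increment $\E_{n_1\in\Lambda_1}f(a+n_1)\geq\eta^3/128$ of Case 1 is extractable directly on $\Lambda_1$. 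The cube of $\eta$ there reflects the $|A_1|/|\Lambda|\asymp\eta^3/(c_1 M)$ pigeonhole cost together with a standard $L^2$-to-positive-$L^1$ pass using the zero-mean hypothesis. I expect the positive/negative splitting together with the handling of this fallback to be the most delicate step.
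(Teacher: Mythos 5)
There is a genuine gap, and it sits exactly where you flagged it, plus one structural misreading. Your plan pigeonholes the frequency to get a single $y_0$ valid for a set $A_1$ of relative density only about $\eta^3/(c_1M)$ in $\Lambda$. That loss depends on the length $M$ of the Bohr set and is fatal: both conclusions of the lemma are $M$-independent ($\eta^3/128$ and $\eta/16$), and the iteration in Proposition \ref{prop2} and the proof of Theorem \ref{thm2} would collapse if the increment degraded like $1/M$ with $M\sim N$. In particular your account of Case 1 --- that the $\eta^3/128$ ``reflects the pigeonhole cost $\eta^3/(c_1M)$'' --- cannot be right, since $\eta^3/(c_1M)$ is far smaller than $\eta^3/128$; the fallback you describe only yields an increment tied to $1/M$. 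Relatedly, your dichotomy trigger (``$y_0$ trivial modulo the dual of $\Lambda_1$'' versus ``genuinely new'') is not the role Case 1 plays in the paper: Case 1 is assumed to fail, and that failure, combined with $\E_{a\in\Lambda}f(a)=0$, is used to bound the measure of the exceptional set $E=\{a\in(1-c_1)\Lambda:\E_{n_1\in\Lambda_1}f(a+n_1)\leq-\eta/32\}$ by roughly $\eta^2|\Lambda|$; whether $y$ is close to the dual of $\Lambda_1$ never enters (adding a nearly trivial frequency as an extra coordinate of $\Lambda'$ is harmless).

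The second unclosed point is your ``crux'' step, converting $\E_{n_1\in\Lambda_1}|G(a+n_1)|\geq\eta/8$ into a pointwise bound $G\geq\eta/16$. A global splitting $G=G^+-G^-$ with $\E_\Lambda G\approx 0$ does not suffice: the $L^1$ mass of $|G|$ you control lives on translates $a+\Lambda_1$ with $a$ in the tiny set $A_1$, so $G$ could have mean about $-\eta/8$ there while its compensating positive part is spread over the rest of $\Lambda$ at height far below $\eta/16$ (and below $\eta^3/128$), and neither case follows. The paper's resolution removes the frequency pigeonholing altogether: since the hypothesis forces the set of $a$ with $\sup_y|\E_{n_1\in\Lambda_1}f(a+n_1)e(n_1y)|\geq\eta/2$ to have measure on the order of $\eta^2|\Lambda|$, while $|E|$ is (after fixing constants) smaller, one fixes a single good $a\in(1-c_1)\Lambda\setminus E$ together with its own frequency $y=y(a)$, builds $\Lambda'$ from $\theta'=(\theta,y)$, and then the $L^1$ lower bound $\E_{n_1\in\Lambda_1}\bigl|\E_{n'\in\Lambda'}f(a+n_1+n')\bigr|\geq\eta/8$ and the mean lower bound $\E_{n_1}\E_{n'}f(a+n_1+n')\geq-\eta/16$ hold at the \emph{same} $a$, forcing $\E_{n'\in\Lambda'}f(a+n_1+n')\geq\eta/16$ (up to the paper's constants) for some $n_1$. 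Your preparatory steps (the Markov selection, the Lipschitz bound for $e(n_1y)$ on $\Lambda_1$, the approximate translation via regularity, and $e(n'y)\approx1$ on $\Lambda'$) are sound and mirror the paper; it is the pigeonholing in $y$ and the global positive/negative accounting that must be replaced by this single-$a$, exceptional-set argument.
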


\begin{proof}
For $a\in\Lambda$ write $\delta(a)=\E_{n_1\in\Lambda_1}f(a+n_1)$.
Suppose that case (i) fails, so that $\delta(a)\leq\eta^3/128$ for
each $a\in (1-c_1)\Lambda$. Let $E$ be the set of $a\in
(1-c_1)\Lambda$ with $\delta(a)\leq -\eta/32$. Note that
\[
\E_{a\in\Lambda}\delta(a)=\E_{a\in\Lambda}\E_{n_1\in\Lambda_1}f(a+n_1)\geq
-200c_1d\geq -\eta^3/128 \] since $\E_{a\in\Lambda}f(a)=0$. This
gives the upper bound $|E|\leq 3\eta^2|\Lambda|/4$. It follows that
there exists $a\in (1-c_1)\Lambda\setminus E$ and $y\in\R$ such that
\[ \left|\E_{n_1\in\Lambda_1}f(a+n_1)e(n_1y)\right|\geq\eta/2. \]

Fix such an $a$ and $y$. Define $\Lambda'$ by taking
$\theta'=(\theta,y)\in\R^{d+1}$. Then for any $n'\in\Lambda'$,
\[
|\E_{n_1\in\Lambda_1}f(a+n_1+n')e((n_1+n')y)-\E_{n_1\in\Lambda_1}f(a+n_1)e(n_1y)|\leq
200c'd. \] After averaging over $n'$ and changing the order of
summation we get
\[
\left|\E_{n_1\in\Lambda_1}\E_{n'\in\Lambda'}f(a+n_1+n')e((a+n_1+n')y)\right|\geq\E_{n_1\in\Lambda_1}f(a+n_1)e(n_1y)-200c'd\geq\frac{\eta}{4}.
\]
On the other hand, for $n'\in\Lambda'$ we have $|1-e(yn')|\leq
8\epsilon'$. Hence
\begin{align*}
\left|\E_{n_1\in\Lambda_1}\E_{n'\in\Lambda'}f(a+n_1+n')e((a+n_1+n')y)\right|
&\leq\E_{n_1\in\Lambda_1}\left|\E_{n'\in\Lambda'}f(a+n_1+n')e(n'y)\right|
\\
&\leq\E_{n_1\in\Lambda_1}\left|\E_{n'\in\Lambda'}f(a+n_1+n')\right|+8\epsilon'.
\end{align*}
It then follows that
\[
\E_{n_1\in\Lambda_1}\left|\E_{n'\in\Lambda'}f(a+n_1+n')\right|\geq\frac{\eta}{8}.
\]
Note that
\[ \E_{n_1\in\Lambda_1}\E_{n'\in\Lambda'}f(a+n_1+n')\geq\delta(a)-200c'd\geq -\frac{\eta}{32}-200c'd\geq -\frac{\eta}{16}\]
since $a\notin E$. Hence there exists $n_1\in\Lambda_1$ such that
\[ \E_{n'\in\Lambda'}f(a+n_1+n')\geq\frac{\eta}{16}. \]
\end{proof}

\begin{proposition}[Iterative step for Theorem
\ref{thm2}]\label{prop2}
Let $\Lambda=\Lambda_{\theta,\epsilon,M}$
be a regular Bohr set of dimension $d$ and let $A\subset\Lambda$ be
a subset with $|A|=\delta|\Lambda|$. Set
\[ x_1=2^{-85}s^{-24}d^{-1}\delta^{6s(s+1)},\ \ x_2=\cdots=x_s=2^{-20}s^{-4}d^{-1}\delta^{s(s+1)}. \]
Then there exists $c_i\in [x_i,2x_i]$ such that the Bohr sets
$\Lambda_1=c_1\Lambda$ and $\Lambda_i=c_i\Lambda_{i-1}$ ($2\leq
i\leq s$) are all regular. Moreover, either one of the following
statements holds:
\begin{enumerate}
\item $A$ contains a non-trivial $s$-configuration;
\item $|\Lambda_s|\leq 32s^2\delta^{-\binom{s+1}{2}}$;
\item the density of $A$ on $a+2\cdot\Lambda_i$ is at least
$(1+1/8s^2)\delta$ for some $a\in\Lambda$ and $1\leq i\leq s$;
\item the density of $A$ on $a+\Lambda_i$ is at least
$\delta+2^{-54}s^{-16}\delta^{4s(s+1)}$ for some $a\in\Lambda$ and
$1\leq i\leq s$;
\item there is a regular Bohr set $\Lambda'=\Lambda_{\theta',\epsilon',M'}$ of dimension $d+1$, where $\epsilon'\geq c'c_1\cdots c_s\epsilon$, $M'\geq c'c_1\cdots c_sM$
with
\[ c'\geq 2^{-37}s^{-8}d^{-1}\delta^{2s(s+1)}, \]
such that the density of $A$ on $a+\Lambda'$ is at least
$\delta+2^{-28}s^{-8}\delta^{2s(s+1)}$ for some $a$.
\end{enumerate}
\end{proposition}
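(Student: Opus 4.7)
The plan is to chain the three technical results established earlier --- Corollary \ref{dich-loc} (local dichotomy), Theorem \ref{u2inverse-loc} (inverse theorem for the local $U^2$-norm), and Lemma \ref{incr-fourier-loc} (Fourier coefficient to density increment) --- in the standard density-increment format.

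I would first apply Lemma \ref{findreg} successively at each level $i$ (to the Bohr set $2x_i\Lambda_{i-1}$) to pick $c_i\in[x_i,2x_i]$ so that every $\Lambda_i=c_i\Lambda_{i-1}$ is regular. Assuming alternative (1) fails, the smallness $c_1\le 2x_1\le\delta^s/(3200ds^2)$ makes Corollary \ref{dich-loc} applicable, and its three alternatives directly yield alternatives (2) and (3) of the proposition, or else the local $U^2$-bound
\[
\|1_A-\delta 1_\Lambda\|_{U^2(\Lambda,\Lambda_i,\Lambda_j)}\ge\eta_0,\qquad \eta_0:=\delta^{\binom{s+1}{2}}/(32 s^2),
\]
for some $1\le i<j\le s$. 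In that remaining case I apply Theorem \ref{u2inverse-loc} to $f=1_A-\delta 1_\Lambda$ with parameter $\eta_0$, using $(\Lambda,\Lambda_i,\Lambda_j)$ in place of $(\Lambda,\Lambda_1,\Lambda_2)$. Since the cumulative dilations $c_1\cdots c_i$ and $c_{i+1}\cdots c_j$ are bounded above by $2x_1$ and $2x_2$ respectively, the two smallness hypotheses $c_1\cdots c_i\le\eta_0^8/(5000d)$ and $c_{i+1}\cdots c_j\le\eta_0^2/(400d)$ are comfortably satisfied. The theorem yields
\[
\E_{a\in\Lambda}\sup_{y\in\R}\Bigl|\E_{n\in\Lambda_j}f(a+n)e(ny)\Bigr|^2\ge\eta_0^8/40.
\]

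Finally, I feed this into Lemma \ref{incr-fourier-loc} with the lemma's $\Lambda_1$ taken to be our $\Lambda_j$ and its $\eta^2:=\eta_0^8/40$, so the lemma's $\eta$ is of order $\delta^{2s(s+1)}$. The hypothesis $\E_{a\in\Lambda}f(a)=0$ is immediate. Its smallness condition on $c_1$ translates to a constraint on the cumulative product $c_1\cdots c_j$, and this is precisely what forces $x_1$ to carry the large exponent $\delta^{6s(s+1)}$ (the cube reflecting $\eta^3$ in alternative (i) of the lemma). The two alternatives of the lemma then map to alternatives (4) and (5) of the proposition, with the advertised Bohr radii and density increments obtained by propagating constants through the chain $\eta_0\to\eta_0^8/40\to\eta$.

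The main obstacle is not conceptual but bookkeeping: one must verify five simultaneous smallness conditions --- regularity at all $s$ levels plus the hypotheses of Corollary \ref{dich-loc}, Theorem \ref{u2inverse-loc}, and Lemma \ref{incr-fourier-loc} --- and then confirm that the explicit $x_i$'s deliver the stated numerical increments. The gap between the very small $x_1$ and the milder $x_2,\dots,x_s$ precisely mirrors the asymmetry between the wider Bohr set $\Lambda_i$ and the narrower $\Lambda_j$ built into the definition of the local $U^2$-norm and exploited by Theorem \ref{u2inverse-loc}.
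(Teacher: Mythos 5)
Your proposal is correct and follows essentially the same route as the paper: the paper's own proof is precisely the chain Lemma \ref{findreg} (to choose regular $c_i\in[x_i,2x_i]$), then Corollary \ref{dich-loc} giving cases (2), (3) or the local $U^2$ lower bound, then Theorem \ref{u2inverse-loc} applied to $1_A-\delta 1_\Lambda$ with the pair $(\Lambda_i,\Lambda_j)$, and finally Lemma \ref{incr-fourier-loc} yielding cases (4) and (5). Your bookkeeping of the smallness conditions (in particular that $c_1\cdots c_i\le 2x_1$ and $c_{i+1}\cdots c_j\le 2x_2$ handle the hypotheses of the inverse theorem, and that the exponent $6s(s+1)$ in $x_1$ comes from the $\eta^3$ in the Fourier-increment lemma) matches the paper's intent.
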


\begin{proof}
Assume that case (1) fails. By Lemma \ref{findreg} we may choose
$c_i\in [x_i,2x_i]$ ($1\leq i\leq s$) such that
$\Lambda_1=c_1\Lambda,\Lambda_2=c_2\Lambda_1,\cdots$ are all regular
Bohr sets. By Corollary \ref{dich-loc}, either we are in case (2) or
(3), or
\[ \|1_A-\delta 1_{\Lambda}\|_{U^2(\Lambda,\Lambda_i,\Lambda_j)}\geq\delta^{\binom{s+1}{2}}/32s^2 \]
for some $1\leq i<j\leq s$. By Theorem \ref{u2inverse-loc},
\[ \E_{a\in\Lambda}\sup_{y\in\R}|\E_{n_j\in\Lambda_j}f(a+n_j)e(n_jy)|^2\geq 2^{-46}s^{-16}\delta^{4s(s+1)}. \]
Finally apply Proposition \ref{incr-fourier-loc} to arrive at case
(4) or (5).
\end{proof}

\begin{proof}[Proof of Theorem \ref{thm2}]
Set
\[ d_0=1,\ \ \epsilon_0=1,\ \ M_0=N,\ \,\delta_0=\delta/2. \]
Start with the $d_0$-dimensional regular Bohr set
$\Lambda_0=\Lambda_{1,\epsilon_0,M_0}=[-N,N]$. Let $A_0=A$ so that
the density of $A_0$ in $\Lambda_0$ is $\delta_0$. We will apply
Proposition \ref{prop2} repeatedly to obtain sequences of Bohr sets
$\Lambda_0,\Lambda_1\cdots,\Lambda_k$ and subsets
$A_1\subset\Lambda_1,\cdots,A_k\subset\Lambda_k$ of densities
$\delta_1,\cdots,\delta_k$ satisfying the following properties:
\begin{enumerate}
\item (dimension bound) $d_i\leq d_{i-1}+1$ for each
$i=1,2,\cdots,k$;
\item (density increment) for each $i=1,2,\cdots,k$ we have
\begin{equation}\label{di1}
\delta_i\geq\delta_{i-1}+2^{-54}s^{-16}\delta_{i-1}^{4s(s+1)};
\end{equation}
moreover if $d_i=d_{i-1}+1$ then
\begin{equation}\label{di2}
\delta_i\geq\delta_{i-1}+2^{-28}s^{-8}\delta_{i-1}^{2s(s+1)};
\end{equation}
\item (size bound) $\epsilon_i\geq c\epsilon_{i-1}$ and $M_i\geq
cM_{i-1}$ for each $i=1,2,\cdots,k$, where
\begin{equation}\label{shrink}
 c=s^{-100s}d_k^{-s}\delta^{10s^3};
 \end{equation}
\item for each $i=1,2,\cdots,k$, existence of nontrivial
$s$-configurations in $A_i$ implies existence of nontrivial
$s$-configurations in $A_{i-1}$;
\item (end of iterations) either $A_k$ contains a nontrivial
$s$-configuration, or
\begin{equation}\label{small}
|c\Lambda_k|\leq 32s^2\delta^{-\binom{s+1}{2}},
\end{equation}
where the constant $c$ is defined in \eqref{shrink}.
\end{enumerate}

It remains to show that \eqref{small} does not occur. This will
imply that $A_k$ contains a nontrivial $s$-configuration, and thus
so does the original set $A$. By \eqref{di1}, \eqref{di2}, and
\eqref{shrink}, we have
\[ k\leq 2^{55}s^{16}\delta^{-4s(s+1)}, \]
\[ d_k\leq 2^{29}s^8\delta^{-2s(s+1)}, \]
\[ \epsilon_k\geq c^k,\ \ M_k\geq c^kN. \]
By Lemma \ref{bohrlow} we have the lower bound
\[ |c\Lambda_k|\geq (c\epsilon_k)^{d_k}M_k\geq c^{(k+1)d_k}c^kN\geq N\exp(-2^{94}s^{27}\delta^{-6s(s+1)}\log(1/\delta)). \]
The inequality
\[ N\exp(-2^{94}s^{27}\delta^{-6s(s+1)}\log(1/\delta))>
32s^2\delta^{-\binom{s+1}{2}} \] can be readily verified under the
assumption on $\delta$. This shows that \eqref{small} cannot happen.
\end{proof}


\section{Proof of Corollary \ref{sumfree}}

The general strategy of proving Corollary \ref{sumfree} is the same
as in \cite{sumfree} (see also \cite{dousse}). It follows from the following proposition, just as
Theorem 1.1 in \cite{sumfree} follows from Theorem 1.2 there.

\begin{proposition}\label{sumfreeh}
Let $h$ be a sufficiently large positive integer. Let $X,Y$ be two
finite subsets of positive integers with
\begin{equation}\label{eqh}
 h^{-29}|Y|\geq |X|\geq
\exp(\exp(Ch^2\log h)) \end{equation}
for some sufficiently large
constant $C>0$. Then $Y$ contains a subset $Z$ of size $|Z|=h$
disjoint from $X$ and is sumfree with respect to the union $X\cup
Y$. In other words, $z_1+z_2\notin X\cup Y$ for distinct $z_1,z_2\in
Z$.
\end{proposition}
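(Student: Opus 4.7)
The plan is to follow the strategy of \cite{sumfree} and \cite{dousse}, applying Theorem \ref{thm2} with $s=h$ in place of Roth's theorem (respectively, Theorem \ref{thm1}), and invoking the Ruzsa embedding lemma (in place of Freiman's theorem, as indicated in the remarks after Corollary \ref{sumfree}) to obtain the sharper quantitative bound. The argument proceeds by contradiction: assume no valid $Z$ exists, extract small doubling from this assumption, embed into a long interval, apply Theorem \ref{thm2}, and derive a contradiction.

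First, on the vertex set $Y \setminus X$ form the ``conflict graph'' $G$ by joining $y_1 \sim y_2$ whenever $y_1+y_2 \in X \cup Y$. The non-existence of an independent set of size $h$ forces $G$ to have at least $\gg |Y|^2/h$ edges by Tur\'{a}n's theorem. Converting this to an additive-energy bound on $Y$ and applying the Balog-Szemer\'{e}di-Gowers theorem produces a subset $Y_0 \subset Y \setminus X$ with $|Y_0| \gg |Y|\cdot h^{-O(1)}$ and doubling constant $|Y_0+Y_0| \leq h^{O(1)}|Y_0|$.

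Next, the Ruzsa embedding lemma produces a Freiman isomorphism of sufficiently high order from $Y_0$ (together with the relevant portion of $X$ lying in $Y_0+Y_0$) into an interval $[N]$, with $N$ bounded by roughly $\exp(\exp(Ch^2\log h))$ via the hypothesis $|X| \geq \exp(\exp(Ch^2\log h))$. The image $Y_0'$ of $Y_0$ has density $\delta \gg h^{-O(1)}$ in $[N]$. After a harmless translation and restriction to a sub-interval if needed, one arranges $Y_0' \subset [N/2+1,N]$. The choice of $N$ makes $100(\log\log N/\log N)^{1/6h(h+1)} \leq \delta$, so Theorem \ref{thm2} with $s=h$ produces a nontrivial $h$-configuration: elements $x_1,\ldots,x_h \in Y_0'$ together with all midpoints $(x_i+x_j)/2 \in Y_0'$.

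Finally, since each $x_i \in [N/2+1,N]$ the pair sums $x_i+x_j$ lie in $[N+2,2N]$, so they sit outside the embedded image of $X \cup Y$. Pulling the $x_i$'s back through the Freiman isomorphism produces $z_1,\ldots,z_h \in Y_0 \subset Y \setminus X$ with $z_i+z_j \notin X \cup Y$ for distinct $i,j$, contradicting the assumption of Step 1. The main technical obstacle is controlling the embedding so that the test ``$x_i+x_j \in X \cup Y$'' transfers correctly: the Freiman isomorphism must be of sufficient order and must cover enough of $X$ to capture all relevant pair-sums, a delicate balance against the density loss incurred at each step.
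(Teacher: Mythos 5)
Your overall skeleton (assume no sumfree $Z$ exists, extract a structured subset of $Y$, embed into a long interval via Ruzsa's lemma, apply Theorem \ref{thm2} with $s=h$, pull back) matches the paper, but the crucial mechanism that makes the pulled-back configuration sumfree with respect to $X\cup Y$ is missing, and your substitute for it does not work. In the paper (following \cite{sumfree}), Proposition \ref{sumfreep} supplies a set $Y_3\subset Y$ that not only has small doubling but satisfies the key property that $2\cdot Y_3$ is disjoint from $X\cup Y$. One then only needs to find an $h$-configuration \emph{inside $Y_3$ itself}: its endpoints $x_i=2n_i+a$ lie in $Y_3\subset Y$, and for $i\neq j$ the sum $x_i+x_j=2(n_i+n_j+a)$ is twice a midpoint, which also lies in $Y_3$, hence $x_i+x_j\notin X\cup Y$ automatically. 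The membership test ``$x_i+x_j\in X\cup Y$'' is therefore never transported through the Freiman isomorphism; the embedding is used only to detect the configuration, which is a purely internal additive property (the relations $2y_{ij}=y_{ii}+y_{jj}$) preserved by $2$-isomorphisms. Your Tur\'an-plus-Balog--Szemer\'edi--Gowers extraction yields only small doubling and loses exactly this property; obtaining it is the nontrivial content of Proposition \ref{sumfreep} (Section 6 of \cite{sumfree}), not a routine energy argument.

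Your replacement mechanism --- translate so that $Y_0'\subset[N/2+1,N]$, note that image pair sums exceed $N$, and conclude they avoid ``the embedded image of $X\cup Y$'' --- fails at several points. The Ruzsa lemma embeds into $\Z/N\Z$, where sums wrap around, and there is no reason a translation places a set of density $h^{-O(1)}$ inside a prescribed half of $[N]$; more fundamentally, a Freiman $2$-isomorphism of (an uncontrolled half of) $Y_0$ alone carries no information about whether the original sums $z_i+z_j$ lie in $X\cup Y$: that is an external membership condition, not an additive relation among the embedded elements, and $X\cup Y$ is not part of the embedding, nor does Lemma \ref{embedding} let you force the relevant part of it to be. You flag this transfer as ``the main technical obstacle,'' but it is the crux of the proof, and the resolution is to secure the property $2\cdot Y_3\cap(X\cup Y)=\emptyset$ \emph{before} embedding, so that sumfreeness comes from doubling the midpoints rather than from any positional property of the image. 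With that input, the rest of your argument --- Ruzsa triangle inequality, Ruzsa embedding, density $\gg h^{-O(1)}$ in $[N]$ with $N\geq |X|\geq\exp(\exp(Ch^2\log h))$ from \eqref{eqh}, and Theorem \ref{thm2} --- goes through as in the paper.
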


Using the notation in \cite{sumfree}, we could take
$F(h)=\exp(\exp(Ch^2\log h))$, and thus Corollary \ref{sumfree} is
true with the lower bound $|B|\geq g(n)\log n$, where $g(n)=cm/\log
m$. Here $c$ is a small constant and $F(m)=n^{1/2}$. Thus $m$ can be
taken to be $(\log\log n)^{1/2-o(1)}$. We now focus on proving
Proposition \ref{sumfreeh}.

\begin{proposition}\label{sumfreep}
Let $h,X,Y$ be as in the statement of Theorem \ref{sumfreeh}.
Suppose that $Y$ does not contain any subset $Z$ of size $h$ which
is sumfree with respect to $X\cup Y$. Then there is a subset
$Y_3\subset Y$ satisfying the following properties:
\begin{enumerate}
\item (large size) $|Y_3|\gg h^{-29}|Y|$;
\item (small doubling) $|Y_3+Y_3|\ll h^{181}|Y_3|$;
\item the set $2\cdot Y_3$ is disjoint from $X\cup Y$. In other words,
$2y_3\notin X\cup Y$ for any $y_3\in Y_3$.
\end{enumerate}
\end{proposition}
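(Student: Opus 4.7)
The strategy for proving Proposition \ref{sumfreep} follows the Tur\'an-plus-Balog-Szemer\'edi-Gowers (BSG) scheme used in \cite{sumfree}, extracting $Y_3$ via a sequence of refinements of $Y$.

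First I would define the auxiliary graph $G$ on vertex set $Y$, placing an edge between distinct $y_1, y_2 \in Y$ whenever $y_1 + y_2 \in X \cup Y$. The hypothesis that $Y$ has no $h$-element sumfree subset with respect to $X \cup Y$ is precisely that $\alpha(G) \leq h-1$, so by Tur\'an's theorem $G$ contains at least $|Y|^2/(2h)$ edges. Since $|X| \leq h^{-29}|Y|$, the number of edges whose endpoint-sum lies in $X$ is at most $|X| \cdot |Y| \leq h^{-29}|Y|^2$, which is negligible. Hence at least $|Y|^2/(3h)$ ordered pairs $(y_1, y_2) \in Y^2$ satisfy $y_1 \neq y_2$ and $y_1 + y_2 \in Y$. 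A single Cauchy-Schwarz step, applied to the representation function $r(s) = |\{(y_1,y_2) \in Y^2 : y_1+y_2 = s\}|$ summed over $s \in Y$, then yields the additive-energy lower bound $E(Y, Y) \gg |Y|^3/h^2$.

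Next, I would apply the Balog-Szemer\'edi-Gowers theorem to this energy bound to extract a subset $Y_2 \subset Y$ with $|Y_2| \gg |Y|/h^{O(1)}$ and small doubling $|Y_2 + Y_2| \leq h^{O(1)}|Y_2|$. The specific exponents $29$ and $181$ in properties (1) and (2) come from chasing constants through the quantitative form of BSG (essentially the version used in \cite{sumfree}), balancing the loss in the passage to $Y_2$ against the power of $h^2$ in the initial energy estimate.

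Finally, I would pass to $Y_3 \subset Y_2$ by discarding those $y \in Y_2$ with $2y \in X \cup Y$, securing property (3). The count $|\{y \in Y_2 : 2y \in X\}|$ is at most $|X| \leq h^{-29}|Y|$, which is negligible compared to $|Y_2|$. The main obstacle is to control $|\{y \in Y_2 : 2y \in Y\}|$; this is the hard part, since a priori the self-doubling set $\{y \in Y : 2y \in Y\}$ can comprise a large fraction of $Y$. I would expect this to be handled either by a direct structural argument --- showing that if this self-doubling set is too large, then a large sumfree subset can be extracted from it directly, contradicting the hypothesis --- or by building the condition $2y \notin X \cup Y$ into the graph $G$ from the outset (i.e., taking the Tur\'an step on the vertex set $\{y \in Y : 2y \notin X \cup Y\}$), provided one can first verify that this restriction retains a constant fraction of $Y$. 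Reconciling the arithmetic of doubling with the sumfree hypothesis is the delicate point in the argument.
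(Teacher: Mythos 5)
First, a point of reference: the paper does not actually prove Proposition \ref{sumfreep} --- its ``proof'' is a citation to Section 6 of \cite{sumfree}, where the set $Y_3$ (the name and the exponents $29$ and $181$ are inherited from there) is produced by an iterated refinement of $Y$ in which all three properties, including property (3), are built in. Your outline for properties (1) and (2) is a reasonable reconstruction of that scheme and is essentially sound: independence number at most $h-1$ plus Tur\'an gives $\gg |Y|^2/h$ edges, the edges with sum in $X$ are negligible since $|X|\leq h^{-29}|Y|$, Cauchy--Schwarz on the representation function gives $E(Y,Y)\gg |Y|^3/h^2$, and Balog--Szemer\'edi--Gowers yields a subset of size $\gg h^{-O(1)}|Y|$ with doubling $\ll h^{O(1)}$.

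The genuine gap is property (3), which you correctly flag as the delicate point but do not prove, and neither of your two proposed fixes works as stated. Discarding $\{y\in Y_2: 2y\in X\cup Y\}$ fails because only the $X$-part of this set is provably small: the set $\{y\in Y: 2y\in Y\}$ can a priori be almost all of $Y$ (e.g.\ for $Y=\{1,2,4,\dots,2^k\}$ it is all of $Y$ but one element), so the discard step could destroy property (1). Restricting the Tur\'an step from the outset to $\{y\in Y: 2y\notin X\cup Y\}$ has the same problem: you have no lower bound on the size of this vertex set, and the no-sumfree-subset hypothesis does not directly give one, since the sumfree condition only constrains sums of \emph{distinct} elements and says nothing about $2y$. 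Property (3) is not a cosmetic add-on: it is exactly what makes the deduction of Proposition \ref{sumfreeh} work (the pairwise sums $z_i+z_j$ of the diagonal elements of an $h$-configuration in $Y_3$ are elements of $2\cdot Y_3$, so disjointness of $2\cdot Y_3$ from $X\cup Y$ is what makes $Z$ sumfree). So as it stands the proposal establishes the standard energy/BSG part but omits the one feature of the proposition that requires a further idea; to complete it you would need to follow the actual refinement argument of \cite{sumfree}, where the hypothesis is exploited again to control the elements $y$ with $2y\in X\cup Y$ during the construction of $Y_3$, rather than by a post hoc discard.
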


\begin{proof}
See Section 6 in \cite{sumfree}.
\end{proof}

To deduce Proposition \ref{sumfreeh} from Proposition \ref{sumfreep}, it
suffices to find an $h$-configuration in $Y_3$. The small doubling
property of $Y_3$ allows us to extract a subset of $Y_3$ lying
densely inside an interval. This is achieved in \cite{sumfree} by an
application of Freiman's theorem. A more economical way of doing
this is to use an embedding lemma due to Ruzsa \cite{ruzsa1,ruzsa2}
(see also Lemma 5.26 in \cite{tao-vu}).

\begin{lemma}[Ruzsa's embedding lemma]\label{embedding}
Let $A\subset\Z$ be a finite set with $|A-A|\leq K|A|$. Then there
is a subset $A'\subset A$ with $|A'|\geq |A|/2$ such that $A'$ is
Freiman isomorphic to a subset $A''\subset\Z/N\Z$ with $N\ll K|A|$. In
other words, there is a bijection $\phi:A'\rightarrow A''$ such that
\[ a_1'+a_2'=a_3'+a_4'\Longleftrightarrow
\phi(a_1')+\phi(a_2')=\phi(a_3')+\phi(a_4') \] whenever
$a_1',a_2',a_3',a_4'\in A'$.
\end{lemma}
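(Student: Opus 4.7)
The plan is to apply Ruzsa's classical random-dilation embedding, closely following the exposition in \cite[Lemma 5.26]{tao-vu} and the original papers of Ruzsa \cite{ruzsa1, ruzsa2}. First I would translate so that $A \subset [0, D]$ with $D := \max A - \min A$, and pick a prime $p > 2D$. The canonical reduction $A \hookrightarrow \Z/p\Z$ is then automatically a Freiman $2$-isomorphism onto its image (because any nonzero integer $a_1 + a_2 - a_3 - a_4$ with $a_i \in A$ has absolute value at most $2D < p$), and it remains to pass from $\Z/p\Z$ to $\Z/N\Z$ with $N \ll K|A|$.

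Next I would randomize and truncate. For each $\lambda \in (\Z/p\Z)^{*}$, define $\phi_{\lambda}: A \to \{0, 1, \dots, p-1\}$ by $\phi_{\lambda}(a) := \lambda a \bmod p$ (interpreted as an integer in $[0, p)$), and set $A'_{\lambda} := \phi_{\lambda}^{-1}\bigl([0, p/2)\bigr)$. Averaging over $\lambda$ gives $\E_{\lambda}|A'_{\lambda}| \geq |A|/2 - O(1)$, so some $\lambda$ achieves $|A'_{\lambda}| \geq |A|/2$. The truncation is crucial: for $a_1, a_2, a_3, a_4 \in A'_{\lambda}$, the integer sum-difference $\phi_{\lambda}(a_1) + \phi_{\lambda}(a_2) - \phi_{\lambda}(a_3) - \phi_{\lambda}(a_4)$ lies in $(-p, p)$.

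I would then compose with reduction mod $N$, $\psi_\lambda(a) := \phi_\lambda(a) \bmod N$. The forward direction of the Freiman-$2$-iso condition is immediate: if $a_1 + a_2 = a_3 + a_4$ in $\Z$, the $\phi_\lambda$ sum-difference is a multiple of $p$ lying in $(-p, p)$, hence zero, and so zero mod $N$. The reverse direction requires $\lambda$ to avoid collisions: those $\lambda$ for which some nonzero $d = a_1 + a_2 - a_3 - a_4 \in 2A - 2A$ gives $\lambda d \bmod p$ equal to a nonzero multiple of $N$ in $(-p, p)$. For each fixed such $d$ (automatically coprime to $p$ by the diameter bound), at most $2p/N$ values of $\lambda$ are bad, so a crude union bound gives $\leq 2|2A - 2A|\cdot p/N$ bad $\lambda$ in total; this is less than $|(\Z/p\Z)^{*}|/2$ when $N \gtrsim |2A - 2A|$.

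The main obstacle is matching the stated $N \ll K|A|$ exactly. The crude union bound in the previous step only yields $N \lesssim |2A - 2A|$, which under $|A-A| \leq K|A|$ is merely $K^{O(1)}|A|$ via Pl\"unnecke--Ruzsa. To reach the sharp linear-in-$K$ conclusion, one must refine the counting, replacing the parametrization by $d\in 2A-2A$ with one by pairs in $A-A$ (writing the iso condition as $\phi_\lambda(a_1)-\phi_\lambda(a_3)=\phi_\lambda(a_4)-\phi_\lambda(a_2)$) and collapsing the resulting double sum via an additive-energy / Cauchy--Schwarz argument to a constant times $|A - A| \cdot p/N$. Taking $N$ to be a prime just above a constant multiple of $|A - A|$ then makes the bad set have size less than $|(\Z/p\Z)^{*}|/2$, and a final simultaneous averaging over $\lambda$ recovers both $|A'_{\lambda}| \geq |A|/2$ and the isomorphism property. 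This refined bookkeeping, carried out explicitly in \cite{ruzsa1, ruzsa2}, is the technical heart of the argument.
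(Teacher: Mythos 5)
The paper does not prove this lemma: it is quoted as a black box, with references to Ruzsa \cite{ruzsa1,ruzsa2} and Lemma~5.26 of \cite{tao-vu}. So there is no internal proof to compare against, and the only thing to assess is whether your sketch actually establishes the statement as written.

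Your steps 1--4 are the standard Ruzsa random-dilation argument and are correct: translate $A$ into $[0,D]$, lift into $\Z/p\Z$ for a prime $p>2D$, set $\phi_\lambda(a)=\lambda a\bmod p$, restrict to the majority half-arc so that $|A'_\lambda|\geq |A|/2$ for every $\lambda$, and then reduce modulo $N$. You also correctly identify the obstruction: for $a_1,a_2,a_3,a_4\in A'_\lambda$ the integer $\phi_\lambda(a_1)+\phi_\lambda(a_2)-\phi_\lambda(a_3)-\phi_\lambda(a_4)$ lies in $(-p,p)$ and is $\equiv\lambda d\pmod p$ with $d=a_1+a_2-a_3-a_4$, so the number of bad $\lambda$ is naturally controlled by $|2A-2A|\cdot O(p/N)$. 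This yields $N\ll |2A-2A|$, which via Pl\"unnecke--Ruzsa is $\ll K^4|A|$, not $\ll K|A|$.

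The genuine gap is in step 5. You assert that re-parametrizing by pairs $(d_1,d_2)=(a_1-a_3,a_4-a_2)\in (A-A)^2$ and applying Cauchy--Schwarz collapses the exceptional set to $O(|A-A|\cdot p/N)$, but you do not carry this out, and I do not believe it works as described. The bad event for a pair $(d_1,d_2)$ is that $\{\lambda d_1\}_p - \{\lambda d_2\}_p$ is a nonzero multiple of $N$ in $(-p,p)$; since this quantity is $\equiv\lambda(d_1-d_2)\pmod p$, the bad set of $\lambda$ for the pair is (up to a boundary discrepancy) determined by $d_1-d_2\in 2A-2A$. Grouping pairs by their difference therefore reproduces the same $|2A-2A|$ count rather than reducing it, and a naive union over all $|A-A|^2$ pairs is strictly worse. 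An additive-energy argument bounds the \emph{expected} number of colliding quadruples, not the measure of $\lambda$ with at least one collision, and does not straightforwardly give a positive-proportion good $\lambda$ with $N\asymp |A-A|$.

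Two further remarks. First, Lemma~5.26 of \cite{tao-vu}, which the paper invokes, is stated in terms of $|2A-2A|$ (more precisely $|sA-sA|$ for a Freiman $s$-isomorphism), not $|A-A|$; the paper's restatement with $|A-A|\leq K|A|$ appears to be a slight informality. Second, this discrepancy is harmless for the paper: in the proof of Proposition~\ref{sumfreeh} the set $Y_3$ has $|Y_3+Y_3|\ll h^{181}|Y_3|$, so Pl\"unnecke gives $|2Y_3-2Y_3|\ll h^{724}|Y_3|$, and replacing the exponent $362$ by $724$ only changes the constant $C$ in \eqref{eqh} and leaves Corollary~\ref{sumfree} intact. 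If you want to prove exactly the statement as printed, you should either justify the sharper $N\ll|A-A|$ bound with an explicit argument (not a reference to a vague Cauchy--Schwarz), or settle for the $|2A-2A|$ version and note that it suffices for the application.
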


\begin{proof}[Proof of Proposition \ref{sumfreeh}]
As noted above, it suffices to show that $Y_3$ contains a nontrivial
$h$-configuration. Since $|Y_3+Y_3|\ll h^{181}|Y_3|$, we have
$|Y_3-Y_3|\ll h^{362}|Y_3|$ (this is a consequence of the Ruzsa
triangle inequality; see Section 2.3 in \cite{tao-vu}). By Ruzsa's
embedding lemma, there is a subset $Y_4\subset Y_3$ with $|Y_4|\geq
|Y_3|/2$ that is Freiman isomorphic to a subset $Y_4'\subset\Z/N\Z$
with $N\ll h^{362}|Y_3|$.

Identify $\Z/N\Z$ with $[N]$ and view $Y_4'$ as a subset of the
integers. To find a nontrivial $h$-configuration in $Y_3$, it
suffices to find a nontrivial $h$-configuration in $Y_4'$. The
density of $Y_4'$ in $[N]$ is
\[ \delta=\frac{|Y_4'|}{N}\gg h^{-362}. \]
By Theorem \ref{thm2}, it thus suffices to show that
\[ h^{-362}\gg (\log N)^{-1/20h^2}. \]
Simple algebra reveals that this is implied by the condition
\eqref{eqh}.
\end{proof}

\bibliography{linear_patterns}{}
\bibliographystyle{plain}

\end{document}